\declaretheorem[name=Theorem]{thm}
\declaretheorem[name=Proposition]{prop}
\declaretheorem[name=Lemma]{lemma}
\newcommand{\Z}{\mathbb{Z}}
\newcommand{\Q}{\mathbb{Q}}
\newcommand{\legendre}[2]{\ensuremath{\left(\frac{#1}{#2}\right)}}
\newcommand{\Ep}{\ensuremath{\mathcal{E}}}
\newcommand{\Bp}{\ensuremath{\mathcal{B}}}
\newcommand{\divides}{\mid}
\newcommand{\notdivides}{\nmid}
\crefname{equation}{}{}
\Crefname{equation}{}{}
\title{Infinitely Many Carmichael Numbers for a Modified Miller-Rabin Prime
  Test}
\author{
\newlength{\bachwidth}
\newlength{\rexwidth}
\settowidth{\bachwidth}{bach@cs.wisc.edu}
\settowidth{\rexwidth}{rex@cs.wisc.edu}
%\begin{center}
\begin{minipage}{\bachwidth}
\centering Eric Bach\footnotemark[1]\\
bach@cs.wisc.edu
\end{minipage}
\qquad
\begin{minipage}{\rexwidth}
\centering Rex Fernando\footnotemark[1] \\
rex@cs.wisc.edu
\end{minipage}
%\end{center}
\bigskip\\
University of Wisconsin - Madison \\
1210 W Dayton St. \\
Madison, WI 53706}
\date{November 2015}
\begin{document}
\renewcommand*{\thefootnote}{\fnsymbol{footnote}}
\maketitle
\footnotetext[1]{Research supported by NSF: CCF-1420750}
\renewcommand*{\thefootnote}{\arabic{footnote}}

\begin{abstract}
  We define a variant of the Miller-Rabin primality test, which is in between
  Miller-Rabin and Fermat in terms of strength. We show that this test has
  infinitely many ``Carmichael'' numbers. We show that the test can also be
  thought of as a variant of the Solovay-Strassen test. We explore the growth
  of the test's ``Carmichael'' numbers, giving some empirical results and a
  discussion of one particularly strong pattern which appears in the results.
\end{abstract}

\section{Introduction}

Primality testing is an important ingredient in many cryptographic protocols.
There are many primality testing algorithms; two important examples are Solovay
and Strassen's test~\cite{ss}, and Rabin's modification~\cite{rabin} of a test by
Miller~\cite{miller}, commonly called the Miller-Rabin test.  Solovay-Strassen has
historical significance because it was proposed as the test to be used as part
of the RSA cryptosystem in~\cite{rsa}, arguably one of the most important
applications of primality testing. Miller-Rabin is the more widely used of the
two tests, because it achieves a small error probability more
efficiently than Solovay-Strassen. A notable example of Miller-Rabin's usage is
in the popular OpenSSL secure communication library~\cite{openssl}. 

We explore the relationship between these two tests and the much older Fermat
test. Both tests can be thought of as building upon the Fermat test; indeed,
all three algorithms have a very similar structure, but the Fermat test has a
fatal weakness which the two more modern tests fix: as Alford, Granville and
Pomerance proved in~\cite{agp}, there is an infinite set of composite numbers
which in effect fool the Fermat test, causing it to report that they are prime.
These numbers are called \emph{Carmichael numbers}, after the discoverer of the first
example of such a number~\cite{carmichael}.

We now give descriptions of the three algorithms. All
three take an odd number \(n \in Z\) to be tested for primality, and start by
choosing a random \(a \in \Z\), where \(2 \leq a \leq n-1\). The Fermat test,
the simplest of the three, checks whether \(a^{n-1} \equiv
1\pmod{n}\). If so, it returns ``Probably Prime'', and if not it returns
``Composite''. The Solovay-Strassen computes the Jacobi symbol
\(\legendre{a}{n}\), and returns ``Composite'' if \(\legendre{a}{n} = 0\) or
\(a^{(n-1)/2} \not\equiv \legendre{a}{n} \pmod{n}\). Otherwise it returns
``Probably prime''.  Let \(n-1 = 2^r \cdot d\)
with \(d\) odd. The Miller-Rabin test considers the sequence 
\begin{align}\label{intro:seq}
  a^d, a^{2 d}, ..., a^{2^{r-1} d}, a^{2^r d};
\end{align}
if \(1\) does not appear in the
sequence, or if it appears directly after \(-1\), then the test returns
``Composite''; otherwise it returns ``Probably Prime''.

We can think of both of newer algorithms as being more specific versions of the
Fermat test. Both essentially perform the Miller-Rabin test, but each also
performs
some extra work, so as to avoid the fatal weakness of the Fermat test. An
interesting question, then, is "Why is this extra computation necessary?" The
infinitude of Carmichael numbers can be thought of as an answer to this
question, in a sense. We explore this question further below.

In particular, we study the following variant of the Miller-Rabin test. Fix
some constant \(z\). Instead of checking the whole sequence \Cref{intro:seq},
only check the last \(z+1\) numbers. In the case where \(z=1\), the test can be
thought of as the following variant of Solovay-Strassen: after generating
\(a\), check whether \(a^{(n-1)/2} \equiv \pm 1 \pmod{n}\). These two
variants are both more specific than Fermat, but less specific than the
respective tests they are based on. The main result of this paper shows that
when Miller-Rabin and Solovay-Strassen are weakened in this way, both tests
behave more like the Fermat test than before, namely there are infinitely
``Carmichael'' numbers for both tests. Thus, just as the infinitude of
Carmichael numbers explains why the Fermat test is not good enough, our result
explains why all the added work in Miller-Rabin is necessary.

Let \(C_z(x)\) denote the number of ``Carmichael'' numbers less than \(x\) for our variant of
Miller-Rabin with parameter \(z\). The contributions of this paper are:
\begin{itemize}
  \item A lower bound on \(C_z(x)\), of the same strength as Alford, Granville
and Pomerance's lower bound on the number of Carmichael numbers and based on
their work.
\item An empirical comparison of \(C_z(x)\) to \(C(x)\), the number of
  Carmichael numbers less than \(x\).
\item Two heuristic arguments suggesting that the ratio \(C_z(x)/C(x)\) decays
      exponentially.
\end{itemize}

The organization of this paper is as follows.  \Cref{preliminaries} contains
relevent preliminaries. \Cref{mainresult} contains the main result, and
\Cref{otherthings} contains the upper bound discussion and empirical results.

%\subsection{Acknowledgements}
%
%\begin{itemize}
%  \item Andrew Shallue
%\end{itemize}

\section{Overview of \protect\cite{agp}'s Original Argument} \label{preliminaries}

We describe the argument used in \cite{agp} to prove there are infinitely many
carmichael numbers.

By Korselt's criterion~\cite{korselt} a positive composite integer \(n>1\) is a
Carmichael number iff it is odd and squarefree and for all primes \(p\)
dividing \(n\), \(n \equiv 1 \pmod{p-1}\). The approach of \cite{agp} uses this
criterion and exploits following theorem, proved by multiple independent
parties (see the discussion in~\cite{agp}). 

\begin{thm}[2 in \protect\cite{agp}]\label{algthm}
  If \(G\) is a finite abelian group in which the maximal order of an element
  is \(m\), then in any sequence of at least \(m(1 + \log(|G|/m))\) (not
  necessarily distinct) elements of \(G\), there is a nonempty subsequence
  whose product is the identity.
\end{thm}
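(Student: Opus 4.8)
The plan is to argue by contradiction: assume the sequence \(g_1,\dots,g_N\) with \(N \ge m(1+\log(|G|/m))\) is \emph{zero-sum free}, meaning no nonempty subsequence has product equal to the identity \(1\), and derive an upper bound on \(N\) that violates this hypothesis. First I would make two reductions. Since \(G\) is abelian, its exponent equals the maximal order \(m\) of an element, so every \(g\in G\) satisfies \(g^m=1\); this is what ties the eventual bound to \(m\). I would also assume without loss of generality that the \(g_i\) generate \(G\): if they generate a proper subgroup \(H\), one simply works inside \(H\), which only decreases the relevant parameters.

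The core of the argument is a counting identity. For \(g \in G\) let \(f(g)\) be the number of subsets \(T\subseteq\{1,\dots,N\}\) with \(\prod_{i\in T} g_i = g\). Zero-sum freeness says precisely that \(f(1)=1\) (only the empty subsequence lands on the identity). Expanding \(\prod_{i=1}^N(1+[g_i])\) in the group algebra and reading off the coefficient of the identity via Fourier inversion over the dual group \(\widehat{G}\) gives
\[
  1 = f(1) = \frac{1}{|G|}\sum_{\chi\in\widehat{G}}\ \prod_{i=1}^N\bigl(1+\chi(g_i)\bigr),
\]
equivalently \(|G| = 2^N + \sum_{\chi\neq\chi_0}\prod_{i=1}^N(1+\chi(g_i))\), where the principal character \(\chi_0\) contributes the term \(2^N\). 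The point is that if \(N\) is large then \(2^N\) is enormous compared with \(|G|\), so the non-principal characters would have to contribute a correspondingly large negative amount; bounding how negative they can be will force \(N\) to be small. Here the structure from \(m\) enters: since \(g_i^m=1\), each value \(\chi(g_i)\) is an \(m\)-th root of unity, so \(|1+\chi(g_i)| = 2|\cos(\theta_{i,\chi}/2)| \le 2\), and because the \(g_i\) generate \(G\), each nontrivial \(\chi\) is nontrivial on at least one \(g_i\), forcing at least one factor strictly below \(2\).

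The main obstacle is exactly this last estimate: controlling \(\sum_{\chi\neq\chi_0}\prod_i(1+\chi(g_i))\) well enough to extract the bound \(N < m(1+\log(|G|/m))\). A single nontrivial character can be trivial on almost all of the \(g_i\), so no per-character bound suffices in isolation; the estimate must be carried out across a whole chain of characters (equivalently, a chain of subgroups), and it is the telescoping of the resulting size ratios that produces the logarithmic factor \(\log(|G|/m)\), while the \(m\)-th-root-of-unity geometry supplies the leading factor \(m\). I expect this to be the delicate part of the proof; once it is in place, substituting \(N = m(1+\log(|G|/m))\) contradicts \(f(1)=1\), so some nonempty subsequence must have product \(1\). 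An alternative, purely combinatorial route instead tracks the set \(U_j\) of all subsequence products of \(g_1,\dots,g_j\) and shows it grows geometrically until it exhausts \(G\); the same growth-ratio-versus-size bookkeeping is the crux there as well.
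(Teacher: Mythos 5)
First, a point of comparison: the paper does not prove this statement. It is Theorem~2 of \cite{agp}, imported as a black box (and AGP in turn attribute it to several independent authors, including van Emde Boas--Kruyswijk), so there is no in-paper proof to measure yours against; your attempt has to stand on its own.

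It does not, and you essentially say so yourself: everything hinges on bounding $\sum_{\chi\neq\chi_0}\prod_{i=1}^N\bigl(1+\chi(g_i)\bigr)$, and you leave that step as an expectation (``I expect this to be the delicate part'') rather than an argument. That estimate is not a detail to be filled in later --- it is the entire content of the theorem. What you do write is correct as far as it goes: the Fourier identity $f(1)=\frac{1}{|G|}\sum_\chi\prod_i(1+\chi(g_i))$ holds, and the reduction to the subgroup generated by the $g_i$ is legitimate because $m(1+\log(|G|/m))$ is nondecreasing in both $m$ (for fixed $|G|$) and $|G|$ (for fixed $m$). But the pointwise bound $|1+\chi(g_i)|\le 2$ gives only $|G|\ge 2^N-(|G|-1)2^N$, which is vacuous, and nothing in the sketch shows how the factor $m$ or the factor $\log(|G|/m)$ would actually emerge from the character sum; the $\{0,1\}$-exponent generating function you chose is not the one any of the known proofs use, and I am skeptical it can be pushed to the stated bound. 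The standard arguments require an additional structural idea. One route: replace $1+\chi(g_i)$ by the full filter $\frac{1}{m}\sum_{a=0}^{m-1}\chi(g_i)^a$, which vanishes unless $\chi(g_i)=1$; this shows that a zero-sum free sequence admits a character $\chi$ with $\chi(g_i)\neq 1$ for \emph{every} $i$ (equivalently, $\prod_i(1-[g_i])\neq 0$ in the group algebra). One then passes to the cyclic image $\chi(G)$, of order $d\le m$, where the Davenport constant equals $d$, greedily extracts roughly $N/d$ disjoint nonempty blocks each multiplying into $\ker\chi$, observes that the resulting sequence over $\ker\chi$ is again zero-sum free, and inducts on the group order; the shrinking of the group at each step produces the $\log(|G|/m)$, and the block length at most $m$ produces the leading factor $m$. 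Without this (or an equivalent) mechanism, your intended contradiction never materializes.
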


Given this theorem, assume we have an odd integer \(L\), and we can find many
primes \(p\) where \(p-1\) divides \(L\). If there are enough such primes, some
of them must multiply to equal the identity in \((\Z/L\Z)^*\). The product of
those primes is then a Carmichael number, by Korselt's criterion. This strategy
was suggested by Erd\"os~\cite{erdos} as a way to prove there are infinitely
many Carmichael numbers, although he did not know \Cref{algthm} and simply
guessed that there might be a way to exhibit many products that produce the
identity. \cite{agp} successfully implemented a modified version of this strategy. We
state the main theorem in \cite{agp} before continuing. Here \(\Ep\) is a set of
positive number-theoretic constants related to choosing \(L\), and \(\Bp\) is
another set of constants related to finding primes in arithmetic progressions
(see~\cite{agp}). Let \(C(X)\) be the number of Carmichael numbers less than \(X\).

\begin{thm}[1 in \protect\cite{agp}]
  For each \(E \in \Ep\) and \(B \in \Bp\) there is a number \(x_0 = x_0(E,B)\)
  such that \(C(x) \geq x^{EB}\) for all \(x \geq x_0\).
\end{thm}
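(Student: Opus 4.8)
The plan is to make Erd\"os's heuristic rigorous, following the route already sketched in the excerpt. First I would fix an odd, squarefree, highly composite integer $L$ and assemble a large set $\mathcal{P}$ of primes $p$ satisfying $p-1 \divides L$, each small enough that products of several of them stay below $x$. By Korselt's criterion, any squarefree $n = \prod_{p \in S} p$ formed from a subset $S \subseteq \mathcal{P}$ is a Carmichael number as soon as $n \equiv 1 \pmod{L}$: for then $p-1 \divides L \divides n-1$ for every prime $p \divides n$. So the task reduces to producing \emph{many} subsets of $\mathcal{P}$ whose product is $\equiv 1 \pmod{L}$.

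Next I would translate this congruence into the group $G = (\Z/L\Z)^*$. Requiring $n \equiv 1 \pmod{L}$ is exactly requiring that the image of $S$ have product equal to the identity of $G$. \Cref{algthm} already guarantees one such nonempty subset once $|\mathcal{P}| \ge m\bigl(1 + \log(|G|/m)\bigr)$, where $m = \lambda(L)$ is the exponent of $G$. To obtain the quantitative bound $C(x) \ge x^{EB}$, and not merely infinitude, I would upgrade this to a counting statement: a character-sum argument over $G$ shows that the number of subsets of $\mathcal{P}$ whose product is the identity is at least about $2^{|\mathcal{P}|}/|G|$. Distinct subsets give distinct squarefree integers, hence distinct Carmichael numbers below $x$, so it remains to force $2^{|\mathcal{P}|}/|G| \ge x^{EB}$.

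The analytic heart of the argument, and the main obstacle, is supplying enough primes $p$ with $p-1 \divides L$. I would take $L$ to be essentially a product of small primes, so that $\lambda(L)$ and $|G|$ are controlled, and then harvest primes $p \le x^{B}$ lying in the progressions $p \equiv 1 \pmod{d}$ across the many divisors $d \divides L$. The crucial input is a theorem on primes in arithmetic progressions guaranteeing that, for a positive proportion of the moduli $d \divides L$, the count of such primes is close to the expected value $\pi(x)/\phi(d)$. This equidistribution, of Bombieri--Vinogradov type and proved via sieve methods, is precisely what the admissible exponents collected in $\Bp$ encode, while the permissible sizes and smoothness of $L$ are encoded in $\Ep$.

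Closing the loop is a delicate balancing act. The modulus $L$ must be large enough that $\mathcal{P}$ is enormous, so that $|\mathcal{P}|$ overwhelms $\log|G|$ in the bound $2^{|\mathcal{P}|}/|G|$; yet its exponent $\lambda(L)$ and order $|G|$ must stay small enough that this bound still exceeds $x^{EB}$. Reconciling these competing demands is exactly what pins down the two-parameter family $(E,B)$ and determines the threshold $x_0(E,B)$. I expect essentially all the real difficulty to lie in this optimization together with the prime-distribution estimate above; the combinatorial and Korselt steps, by contrast, are comparatively routine once $\mathcal{P}$ and $L$ are in hand.
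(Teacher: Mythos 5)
Your outline follows Erd\H{o}s's heuristic, but it breaks down at the step you call the ``analytic heart,'' and the failure is precisely the obstacle that \cite{agp} had to engineer around. You need many primes $p$ with $p-1 \divides L$, yet you propose to harvest them as primes $p \le x^B$ with $p \equiv 1 \pmod{d}$ for divisors $d \divides L$. Those two conditions are very far apart: $p \equiv 1 \pmod d$ only gives $d \divides p-1$, and $p-1$ will generically carry large prime factors not dividing $L$, so $p - 1 \divides L$ forces $p = d+1$ exactly. No Bombieri--Vinogradov-type input produces even one prime of the form $d+1$ with $d \divides L$ for a prescribed large $L$, let alone $x^{EB}$-many. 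As the paper's overview notes, the best \cite{agp} could do is a modified theorem of Prachar: there is a \emph{single} auxiliary $k \le x^{1-B}$ with $(k,L)=1$ such that $dk+1$ is prime for a positive proportion of the divisors $d \le x^B$ of $L$, arranged so that every such prime is $\equiv 1 \pmod k$. One then works in $(\Z/kL\Z)^*$, where these primes land in the subgroup of residues $\equiv 1 \pmod k$, which is isomorphic to $(\Z/L\Z)^*$ and hence still has small exponent. Without this device (or a substitute for it) your set $\mathcal{P}$ is empty for all you can prove, and nothing downstream can be salvaged.

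A secondary gap: your claim that a character-sum argument yields at least about $2^{|\mathcal{P}|}/|G|$ identity-product subsets is not justified. The count is $\frac{1}{|G|}\sum_\chi \prod_{p \in \mathcal{P}}(1+\chi(p))$, and for nontrivial $\chi$ the product $\prod_{p}(1+\chi(p))$ can be as large as $2^{|\mathcal{P}|}$ (e.g.\ if $\mathcal{P}$ concentrates in $\ker\chi$); you have no equidistribution of $\mathcal{P}$ in $G$ to rule this out. This is exactly why \cite{agp} instead use the combinatorial statement in \Cref{algthm} together with a counting argument over subsets of a fixed size $t$ (which also enforces the constraint, glossed over in your plan, that each product stays below $x$). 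The overall architecture you describe is the right one, but these two steps are where all the content of the theorem lives.
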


At the time the best results for \(\Ep\) and \(\Bp\) allowed the exponent to be
\(2/7\). The exponent has since been improved slightly;
see~\cite{harman,harman2}.

To achieve this result, \cite{agp} show there is an \(L\) (parameterized by
\(X\)) where \(n((\Z/L\Z)^*)\) is relatively small compared to \(L\). Ideally,
they would have then shown that there are many primes \(p\) where \(p-1 | L\). But
the best they could show was that there is some \(k < L^c\) for
some \(c < 1\) where there are many primes \(p\) that satisfy \(p-1 | kL\).
This is from a theorem by Prachar~\cite{prachar}. This is not as convenient,
because now the group in question is \(G = (\Z/kL\Z)^*\), whose largest order
\(m\) is not necessarily small. \cite{agp} gets around this by modifying
Prachar's theorem to guarantee that \((k,L) = 1\) and for each \(p, p \equiv 1
  \pmod{k}\). These primes are in the subgroup of \((\Z/kL\Z)^*\) of residue
classes that are \(1\mod{k}\), which is isomorphic to \((\Z/L\Z)^*\), thus
fixing the problem.  They used a simple counting argument based on
\Cref{algthm} to show the existence of enough products of primes chosen from
the set of \(p\) to satisfy the lower bound claimed.

\section{Depth \(z\)} \label{mainresult}

We restate the Miller-Rabin variant described in the introduction. Given an odd
positive integer \(n\) to test for primality, choose \(a\) at random from
\(\Z_n^*\). Let \(n-1 = 2^r \cdot d\) with \(d\) odd. The original Miller-Rabin
uses the sequence 

\begin{align}\label{mainseq}
  a^d, a^{2 d}, ..., a^{2^{r-1} d}, a^{2^r d};
\end{align}

if \(1\) does not appear in the sequence, or if it appears directly after
\(-1\), then the test returns "Composite"; otherwise it returns "Probably
Prime." Our variant, which we refer to as the \emph{\(z\)-deep Miller-Rabin
  test} (with parameter \(z\)), performs the same check, but only considers the
last \(z+1\) numbers in the sequence. (If there are fewer than \(z+1\) numbers
in the sequence it looks at all of them.) Note that the \(0\)-Miller-Rabin test
is simply the Fermat test. 

We define a \emph{\(z\)-deep Carmichael number} to be a composite number \(n\) which
fools the \(z\)-deep Miller-Rabin test for all \(a \in \Z_n^*\). We have the
following claim:

\begin{prop}\label{zkorselt}
  \(n\) is a \(z\)-deep Carmichael number iff it is odd and squarefree and for
  all \(p \divides n,\) \((p-1) \divides \frac{n-1}{2^z}\).
\end{prop}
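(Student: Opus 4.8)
The plan is to prove both directions by reducing the behaviour of the $z$-deep test to the orders of $a$ modulo the prime factors of $n$, via the Chinese Remainder Theorem (CRT). Write $b_j = a^{2^j d} \bmod n$ for $0 \le j \le r$, so that $b_{j+1} = b_j^2$ and $b_r = a^{n-1}$, and set $w = \max(r-z,0)$, the index at which the examined window $b_w, b_{w+1}, \dots, b_r$ begins. Restricting the Miller--Rabin criterion to this window, the test reports ``Probably Prime'' for $a$ exactly when $b_w \equiv 1 \pmod n$, or $b_j \equiv -1 \pmod n$ for some $j$ with $w \le j \le r-1$. Each of these cases forces $b_r \equiv 1$, so passing always entails $a^{n-1} \equiv 1 \pmod n$.

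For the backward direction, assume $n$ is odd, squarefree, and $(p-1) \divides \frac{n-1}{2^z} = 2^{w} d$ for every prime $p \divides n$ (which in particular presupposes $r \ge z$, so $w = r-z$). Then for each $a \in \Z_n^*$ and each $p \divides n$, Fermat's little theorem gives $a^{2^{w} d} \equiv 1 \pmod p$, and squarefreeness together with CRT upgrades this to $b_w \equiv 1 \pmod n$. Hence the first window condition holds for every base, and $n$ (being composite) is a $z$-deep Carmichael number.

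For the forward direction, suppose $n$ is a $z$-deep Carmichael number. Since passing forces $a^{n-1} \equiv 1 \pmod n$ for all $a \in \Z_n^*$, the number $n$ is a Fermat pseudoprime to every base, hence a classical Carmichael number, and Korselt's criterion yields that $n$ is odd and squarefree with $(p-1) \divides (n-1)$ for all $p \divides n$. Writing $p - 1 = 2^{s_p} t_p$ with $t_p$ odd, this already gives $t_p \divides d$ and $s_p \le r$. The remaining task---the crux of the argument---is to sharpen this to $s_p \le w$ for every $p$, which together with $t_p \divides d$ delivers $(p-1) \divides 2^{w} d$ and completes the proof; in the degenerate case $r < z$, where $w = 0$, the bound $s_p \le 0$ is impossible for an odd prime, correctly showing no such $n$ exists.

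To obtain $s_p \le w$ I would argue by contradiction and exhibit a base the test fails. The key computation is that, in the cyclic group $(\Z/p\Z)^*$, one has $b_j \equiv -1 \pmod p$ iff $\mathrm{ord}_p(a^d) = 2^{j+1}$, and $b_j \equiv 1 \pmod p$ iff $\mathrm{ord}_p(a^d) \divides 2^{j}$; by CRT the two window conditions then translate into statements about the tuple of local orders. Suppose some prime $q \divides n$ has $s_q \ge w+1$. Using that $n$ is squarefree and composite, hence has at least two distinct prime factors, I would choose via CRT a base $a \in \Z_n^*$ with $a \equiv 1 \pmod{p_0}$ for one prime $p_0 \ne q$ and $a$ a primitive root modulo $q$, so that $\mathrm{ord}_{p_0}(a^d) = 1$ while $\mathrm{ord}_q(a^d)$ has $2$-adic valuation exactly $s_q \ge w+1$. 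This single base defeats both branches at once: the ``$b_w \equiv 1$'' branch fails because the $2$-part of $\mathrm{ord}_q(a^d)$ exceeds $2^{w}$, and the ``some $b_j \equiv -1$'' branch fails because the local orders are not all equal and so cannot share a common value $2^{j+1}$. The main obstacle I anticipate is precisely pinning down the windowed pass condition and its order-theoretic reformulation---verifying the equivalence $b_j \equiv -1 \pmod p \iff \mathrm{ord}_p(a^d) = 2^{j+1}$ and confirming that the constructed base escapes \emph{both} branches simultaneously; the hypothesis of at least two distinct prime factors is essential here, since with a single factor the local orders are trivially ``all equal.''
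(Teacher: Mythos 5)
Your proof is correct; the paper omits the proof entirely (``similar to the proof of Korselt's criterion and left to the reader''), and your argument is precisely that parallel: Fermat plus CRT over the squarefree factorization gives \(b_{r-z}\equiv 1\) for sufficiency, and for necessity a CRT-constructed witness (\(a\equiv 1\bmod p_0\), \(a\) a primitive root mod the offending prime \(q\)) kills both the \(b_w\equiv 1\) branch, via \(\nu_2(\mathrm{ord}_q(a^d))=s_q>w\), and the \(b_j\equiv -1\) branch, via \(b_j\equiv 1\pmod{p_0}\). Your reduction of the windowed pass condition to ``\(b_w\equiv 1\) or some \(b_j\equiv -1\), \(w\le j\le r-1\)'' and your treatment of the degenerate case \(r<z\) are both consistent with the paper's intended reading of the test.
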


The proof is similar to the proof of Korselt's criterion and is left to the
reader. As before, \(C_z(x)\) is the number of \(z-\)deep Carmichael numbers less than \(x\).
Our goal is to prove the following theorem. 

\begin{thm}\label{mainthm}
    Choose any constant \(z \in \Z^+\). For each \(E \in \Ep, B \in \Bp\) and
    \(\epsilon > 0\), there is a number \(x_4(E,B,\epsilon)\), such that
    whenever \(x \geq x_4(E,B,\epsilon)\), we have \(C_z(x) \geq x^{EB -
        \epsilon}\).
\end{thm}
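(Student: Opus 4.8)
The plan is to mirror the AGP argument, replacing the condition that primes $p$ satisfy $(p-1) \divides L$ (which produces ordinary Carmichael numbers) with the stronger $z$-deep requirement from \Cref{zkorselt}, namely that for each prime factor $p$ of $n$ we have $(p-1) \divides \frac{n-1}{2^z}$. The key structural observation is that if we build $n$ as a product of primes $p$ all satisfying $p \equiv 1 \pmod{2^{z+1}}$ and all satisfying $(p-1) \divides L$ for a suitable odd $L$ that is itself divisible by $2^{z}$ in the right sense, then the factor of $2^z$ in the denominator can be absorbed. More precisely, I would arrange for $L = 2^z L'$ with $L'$ odd, require each prime $p$ to satisfy $p \equiv 1 \pmod{2^{z+1}}$ (so that $2^z \divides p-1$ and hence $2^{z} \parallel$ controls the $2$-adic valuation), and then seek a subproduct $n$ of such primes with $n \equiv 1 \pmod{L}$. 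The Korselt-type condition $(p-1)\divides\frac{n-1}{2^z}$ then follows because $\frac{n-1}{2^z}$ is an integer divisible by $L'$ and each $p-1$ divides $L = 2^z L'$ after accounting for the forced power of two.

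First I would fix the constant $z$ and choose $L = L(x)$ exactly as AGP do, but insist additionally that $2^{z+1}$ divides $L$ and that we only collect primes $p$ lying in the subgroup $H \le (\Z/L\Z)^*$ consisting of residues $\equiv 1 \pmod{2^{z+1}}$. This subgroup has index a bounded power of two (namely $2^{z}$ roughly), so restricting to it costs only a constant factor in the count of available primes and leaves the exponent $EB$ unchanged. Second, I would invoke AGP's machinery — Prachar's theorem as modified in \cite{agp}, to guarantee coprimality $(k,L)=1$ and $p \equiv 1 \pmod k$ — to produce a large set $\mathcal{S}$ of primes $p$ with $p-1 \divides kL$ and $p \equiv 1 \pmod{2^{z+1}}$. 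The extra congruence $p \equiv 1 \pmod{2^{z+1}}$ is imposed by working in the appropriate subgroup, exactly as AGP impose $p \equiv 1 \pmod k$.

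Third, I would apply \Cref{algthm} to the group $G = H \cong$ (the image of the relevant subgroup), using the elements $p \bmod L$ for $p \in \mathcal{S}$: whenever $|\mathcal{S}|$ exceeds $m(1 + \log(|G|/m))$ there is a nonempty subproduct equal to the identity, giving $n = \prod p \equiv 1 \pmod{L}$. Because each $p \equiv 1 \pmod{2^{z+1}}$, the product $n$ satisfies $n \equiv 1 \pmod{2^{z+1}}$ as well, so $2^{z+1} \divides n-1$ and $\frac{n-1}{2^z}$ is even; combined with $L' \divides \frac{n-1}{2^z}$ and the fact that each $p-1$ divides $2^z L'$, the $z$-deep Korselt condition holds. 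As in AGP, a counting argument over the many possible subproducts of the size-$t$ subsets of $\mathcal{S}$ yields at least $x^{EB}$ such $n$ below $x$; the loss from restricting to the index-$2^z$ subgroup and from the slightly larger modulus $2^{z+1}$ is absorbed into the arbitrary $\epsilon$, producing the bound $C_z(x) \ge x^{EB-\epsilon}$.

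The main obstacle I anticipate is bookkeeping the powers of two correctly so that the $2^z$ in the denominator of \Cref{zkorselt} is exactly cancelled rather than under- or over-counted. One must ensure simultaneously that $2^z$ divides each $p-1$ (so the $z$-deep condition can hold), that the $2$-adic valuation of $n-1$ is at least $z+1$ (so $\frac{n-1}{2^z}$ absorbs the odd part cleanly), and that forcing these congruences does not collapse the available prime count below AGP's threshold. Verifying that the index-$2^z$ subgroup restriction only costs a constant factor — and hence perturbs the exponent by at most $\epsilon$ — is the quantitative heart of the argument and where I would spend the most care; everything else is a faithful transcription of the AGP proof with $(p-1) \divides L$ replaced throughout by the $2$-adically refined divisibility.
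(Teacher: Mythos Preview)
Your 2-adic bookkeeping runs in the wrong direction, and this is a genuine gap rather than a detail to be cleaned up. The $z$-deep Korselt condition $(p-1)\mid \frac{n-1}{2^z}$ is equivalent to $2^{\,\nu_2(p-1)+z}\mid n-1$, so to verify it via a \emph{fixed} congruence $n\equiv 1\pmod{2^M}$ you need an \emph{upper} bound on $\nu_2(p-1)$. Your constraint $p\equiv 1\pmod{2^{z+1}}$ gives only the lower bound $\nu_2(p-1)\ge z+1$; nothing prevents $\nu_2(p-1)$ from being much larger, and then no fixed $M$ suffices. Concretely, your deduction ``$L'\mid\frac{n-1}{2^z}$ and $(p-1)\mid 2^zL'$, hence $(p-1)\mid\frac{n-1}{2^z}$'' fails: $L'$ is odd, so $L'\mid\frac{n-1}{2^z}$ says nothing about the $2$-part of $\frac{n-1}{2^z}$, which must still absorb the (uncontrolled) $2$-part of $p-1$. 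Your setup is in fact internally inconsistent --- you simultaneously ask for $p\equiv1\pmod{2^{z+1}}$ (forcing $\nu_2(p-1)\ge z+1$) and $(p-1)\mid L=2^zL'$ with $L'$ odd (forcing $\nu_2(p-1)\le z$) --- and since AGP's $L$ must be squarefree for the Prachar step, ``insist that $2^{z+1}\mid L$'' is impossible for $z\ge1$ anyway.

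What is actually needed, and what the paper does, is to bound $\nu_2(p-1)$ from \emph{above}. In the AGP construction each prime has $p-1=dk$ with $d\mid L$ odd and $(k,L)=1$, so $\nu_2(p-1)=\nu_2(k)$. The paper modifies Prachar's theorem by inserting one additional subtracted term $\pi(dx^{1-B};2^{\nu_0}d,1)$ into the sieve, forcing $2^{\nu_0}\nmid k$ and hence $\nu_2(p-1)<\nu_0$ for a fixed constant $\nu_0$ depending only on $B$. Only with that upper bound in hand does the Banks--Pomerance device of working in $(\Z/2^{\nu_0+z}L\Z)^*$ (your ``subgroup'' idea, but with the correct modulus) yield $2^{\nu_0+z}\mid n-1$, which combined with $\nu_2(p-1)<\nu_0$ gives the $z$-deep condition. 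Your proposal never touches the $2$-adic valuation of $k$, and that is the missing ingredient.
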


We now introduce our modification of the argument in~\cite{agp}. Carmichael
numbers are constructed in~\cite{agp} from sequences of primes which are of the
form \(p = dk + 1\) where \(d \divides L\) and for some \(k \leq L^c\), \(c <
  1\).  Let \(k = 2^\nu l\). We want to constrain each constructed Carmichael
number \(n\) to be \(\equiv 1 \pmod{2^{\nu+z}}\); if we can achieve this, then
the resulting numbers will be \(z\)-deep. Banks and Pomerance~\cite{banks}
modifiy the method in~\cite{agp} to constrain the constructed Carmichael
numbers to be \(1\) modulo some given constant number. (This is a simple
subcase of their general result.) Going beyond this, we show that \(k\) can be constrained so that
\(\nu\) is bounded above by a constant. Then we use the result in~\cite{banks}
to show there are infinitely many Carmichael numbers which are \(1
  \pmod{2^{\nu+z}}\), proving \Cref{mainthm}.

\subsection{Bounding \(\nu\)}

\cite{agp} choose \(k\) during their proof of the modified Prachar's
Theorem, which we now state. Recall that \(B\) is one of the two
number-theoretic constants which \cite{agp} relies on throughout their paper.

\begin{thm}[3.1 in~\protect\cite{agp}]
  There exists a number \(x_3(B)\) such that if \(x \geq x_3(B)\) and \(L\)
  is a squarefree integer not divisible by any prime exceeding
  \(x^{(1-B)/2}\) and for which \(\sum_{\mbox{prime }q\divides L} 1/q \leq
    (1-B)/32\), then there is a positive integer \(k \leq x^{1-B}\) with
  \((k,L) = 1\) such that 

  \[\#\{d\divides L : dk + 1 \leq x, dk + 1 \mbox{ is prime}\} \geq
    \frac{2^{-D_B-2}}{\log x}\#\{d \divides L : 1 \leq d \leq x^B\}.\]
\end{thm}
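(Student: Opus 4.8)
The plan is to prove this by a first-moment (averaging) argument over the candidate multipliers $k$: rather than construct a good $k$ directly, I would show that the average over $k$ of the quantity to be bounded is large, so that some $k$ must beat the average. Concretely, consider the double count
\[
  T = \sum_{\substack{k \le x^{1-B} \\ (k,L)=1}} \#\{d \divides L : dk+1 \le x,\ dk+1 \text{ is prime}\},
\]
the number of pairs $(k,d)$ with $d \divides L$ and $dk+1$ a prime up to $x$. Since the admissible $k$ form a subset of $[1,x^{1-B}]$, there are at most $x^{1-B}$ of them, so some particular $k$ (necessarily with $(k,L)=1$) achieves at least the average $T/x^{1-B}$. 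It therefore suffices to prove $T \ge 2^{-D_B-2}\,x^{1-B}(\log x)^{-1}\,\#\{d \divides L : 1 \le d \le x^B\}$ and take that $k$.

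To lower-bound $T$ I would reindex it as a sum over divisors, $T = \sum_{d \divides L} \#\{k \le x^{1-B} : (k,L)=1,\ dk+1 \le x \text{ prime}\}$, and discard all but the terms with $d \le x^B$. For such $d$ the constraint $dk+1\le x$ leaves $k$ ranging essentially up to $x^{1-B}$, so the inner set counts primes $p=dk+1 \le d\,x^{1-B}$ in the progression $1 \pmod d$. The defining property of $B \in \Bp$ is exactly what guarantees that, outside a controlled exceptional set, a proportion at least $2^{-D_B}$ of the divisors $d \divides L$ with $d \le x^B$ are \emph{good}, in the sense that $\#\{k \le x^{1-B} : dk+1 \text{ prime}\} \ge \tfrac12\,\tfrac{d\,x^{1-B}}{\phi(d)\log x}$ (the smoothness of $L$ is what keeps enough divisors below $x^B$). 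Summing this over the good $d$ and using $d/\phi(d) \ge 1$ turns the $1/\phi(d)$ weights back into a plain count of divisors, giving a bound of the shape $T \gtrsim 2^{-D_B}\,x^{1-B}(\log x)^{-1}\,\#\{d \divides L : d \le x^B\}$.

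The one genuine obstacle is the coprimality requirement $(k,L)=1$, which the clean prime-counting input cannot see. I would handle it by subtraction: for a fixed good $d$, a prime $p=dk+1$ violates $(k,L)=1$ only when some prime $q \divides L$ divides $k$, in which case $dq \divides p-1$, i.e.\ $p$ lies in the progression $1 \pmod{dq}$. Here both hypotheses on $L$ earn their keep. Because no prime factor of $L$ exceeds $x^{(1-B)/2}$, we have $q \le x^{(1-B)/2} < x^{1-B}$, so the modulus $dq < d\,x^{1-B}$ and Brun--Titchmarsh applies with $\log(x^{1-B}/q) \ge \tfrac{1-B}{2}\log x$, bounding the count of such $p$ by $O\!\big(\tfrac1q\,\tfrac{d\,x^{1-B}}{\phi(d)\log x}\big)$; and because $\sum_{q \divides L} 1/q \le (1-B)/32$, summing over $q$ shows the discarded total is at most about a quarter of the main term. (The thresholds $(1-B)/2$ and $(1-B)/32$ are tuned precisely so that the Brun--Titchmarsh loss $\tfrac{4}{1-B}\cdot\tfrac{1-B}{32}=\tfrac18$ stays well below the main $\tfrac12$.) Squarefreeness of $L$ keeps the factorization of $dq$ and the sum over distinct $q$ clean. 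The hard part will be verifying the exact form of the $\Bp$-property and carrying this subtraction bookkeeping without slippage.

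Thus a proportion at least $\tfrac38$ of the primes survive with $(k,L)=1$, preserving the displayed lower bound on $T$ up to the harmless factor $2^{-2}$. Dividing by the number (at most $x^{1-B}$) of admissible $k$ then produces a single multiplier $k \le x^{1-B}$ with $(k,L)=1$ for which $\#\{d \divides L : dk+1 \le x \text{ prime}\} \ge 2^{-D_B-2}(\log x)^{-1}\,\#\{d \divides L : 1 \le d \le x^B\}$, which is the claim.
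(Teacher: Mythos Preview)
Your proposal is correct and follows essentially the same route as the paper's sketch of \cite{agp}'s proof: for each non-exceptional divisor $d \le x^B$ of $L$, lower-bound the count of primes $p \le dx^{1-B}$ with $p \equiv 1 \pmod d$ and $((p-1)/d,L)=1$ by the $\Bp$-property minus a Brun--Titchmarsh sieve over the primes $q \mid L$, then select $k$ by pigeonhole as the value of $(p-1)/d$ that occurs most often. The only discrepancy is a harmless slip in your Brun--Titchmarsh constant (the paper obtains $8/(1-B)$ rather than $4/(1-B)$, so the surviving fraction is $1/4$, not $3/8$), and that $1/4$ is precisely the $2^{-2}$ appearing in the theorem's bound.
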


We sketch~\cite{agp}'s proof. It involves showing that for each divisor \(d
  < x^B\) of \(L\) (excluding some troublesome divisors) the number of primes
\(p \leq dx^{1-B}\) with \(p \equiv 1\mod{d}\) and \(((p-1)/d,L)=1\) is
large, and then by choosing \(k\) to be the \((p-1)/d\) that shows up the
most. The lower bound on the number of such primes \(p\) is achieved by
taking the number of primes \(p \leq dx^{1-B}\) with \(p\equiv 1\mod{d}\),
and then subtracting the number of primes less than \(dx^{1-B}\) that are 
\(1 \mod dq\) for any prime \(q\divides L\):
\[\pi(dx^{1-B};d,1) - \sum_{\mbox{prime }q\divides L} \pi(dx^{1-B};dq,1).\]
\cite{agp} use a lower bound which they derive to show \[\pi(dx^{1-B};d,1) \geq
  \frac{dx^{1-B}}{2\phi(d)\log{x}},\] and the Brun-Titchmarsh upper
bound~\cite{mv} to show \[\pi(dx^{1-B};dq,1) \leq
  \frac{8}{q(1-B)}\frac{dx^{1-B}}{\phi(d)\log{x}}.\]
It then follows that

\begin{align*}
\pi(dx^{1-B};d,1) &- \sum_{\text{prime }q\divides L} \pi(dx^{1-B};dq,1) \\
&\geq \left(\frac{1}{2} - \frac{8}{1-B}\sum_{\text{prime }q\divides L} 
  \frac{1}{q}\right)\frac{dx^{1-B}}{\phi(d)\log{x}} \geq
\frac{x^{1-B}}{4\log{x}},
\end{align*}
the last bound following from the assumption that \(\sum_{\mbox{prime
    }q\divides L} 1/q \leq (1-B)/32\). This concludes our sketch.  

\medbreak

Our goal is to get the same result with the added guarantee that the
largest power of \(2\) that divides \(k\) is small. We add the additional condition that
\(p \not\equiv 1\mod{2^{\nu_0}d}\), where \(\nu_0 \in \Z^+\) is a constant chosen
so that \(\frac{1-B}{32} > \frac{1}{2^{\nu_0}}\). So the number of such primes
\(p\) becomes

\[\pi(dx^{1-B};d,1) - \sum_{\mbox{prime }q\divides L} \pi(dx^{1-B};dq,1) -
  \pi(dx^{1-b};2^{\nu_0}d,1).\]

By the same bound as before, \(\pi(dx^{1-B};2^{\nu_0}d,1) \leq
  \frac{8}{2^{\nu_0}(1-B)}\frac{dx^{1-B}}{\phi(d)\log{x}}\), thus

\begin{align*}
\pi(dx^{1-B};d,1) &- \sum_{\mbox{prime }q\divides L} \pi(dx^{1-B};dq,1) -
  \pi(dx^{1-b};2^{\nu_0}d,1) \\
&\geq \left(\frac{1}{2} - \frac{8}{1-B}\left(\sum_{\text{prime }q\divides L} 
    \frac{1}{q} +
    \frac{1}{2^{\nu_0}}\right)\right)\frac{dx^{1-B}}{\phi(d)\log{x}}.
  \end{align*}
This requires \(\sum_{\text{prime }q\divides L}\frac{1}{q} \leq
  \frac{1-B}{32} - \frac{1}{2^{\nu_0}}\) in order to result in the same lower
bound of \(\frac{x^{1-B}}{4\log{x}}\), which is a stronger assumption than
before; but this turns out not to be a problem (explained later). The result of
all the above is our modified version of \cite{agp}'s Theorem 3.1:

%\begin{minipage}{\textwidth}
\begin{thm}\label{prachar}
  Choose any \(\nu_0 \in \Z^+\) so that \(\frac{1-B}{32} >
    1/2^{\nu_0}\). Then there exists a number \(x_3(B)\) such that if
  \(x \geq x_3(B)\) and \(L\) is a squarefree integer not divisible by any
  prime exceeding \(x^{(1-B)/2}\) and for which \(\sum_{\mbox{prime }q\divides
      L} 1/q \leq (1-B)/32 - 1/2^{\nu_0}\), then there is a positive integer
  \(k \leq x^{1-B}\) with \((k,L) = 1\) and \(2^{\nu_0} \notdivides k\) such that 

  \[\#\{d\divides L : dk + 1 \leq x, dk + 1 \mbox{ is prime}\} \geq
    \frac{2^{-D_B-2}}{\log x}\#\{d \divides L : 1 \leq d \leq x^B\}.\]
\end{thm}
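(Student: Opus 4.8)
The plan is to mirror the AGP proof of their Theorem 3.1 almost line for line, inserting one extra subtracted term into the prime count and then verifying that the same pigeonhole step still produces the claimed $k$ with the extra divisibility constraint. Since the excerpt has already carried out the new inequality
\[
\pi(dx^{1-B};d,1) - \sum_{\text{prime }q\divides L}\pi(dx^{1-B};dq,1) - \pi(dx^{1-B};2^{\nu_0}d,1) \geq \frac{x^{1-B}}{4\log x},
\]
valid under the strengthened hypothesis $\sum_{q\divides L}1/q \leq (1-B)/32 - 1/2^{\nu_0}$, the real content is bookkeeping: I must argue that the left-hand side counts exactly those primes $p \leq dx^{1-B}$ with $p\equiv 1\pmod d$, $((p-1)/d,L)=1$, \emph{and} $2^{\nu_0}\nmid (p-1)/d$, and that this lower bound holds for every divisor $d$ of $L$ with $d \leq x^B$ outside the same negligible set of ``troublesome'' divisors that AGP discard.

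Concretely, first I would fix $\nu_0$ satisfying $(1-B)/32 > 1/2^{\nu_0}$ and take $x$ at least as large as the threshold $x_3(B)$ coming from AGP's argument (enlarging it if the inclusion of the third term forces a larger cutoff). Next, for each admissible divisor $d\divides L$ with $d \leq x^B$, I would interpret the three-term expression above: the primes $p \equiv 1\pmod{2^{\nu_0}d}$ are exactly those with $2^{\nu_0}\divides(p-1)/d$, so subtracting $\pi(dx^{1-B};2^{\nu_0}d,1)$ removes precisely the primes we now wish to exclude, leaving a count of primes $p$ with $p\equiv 1\pmod d$, $((p-1)/d,L)=1$, and $2^{\nu_0}\nmid(p-1)/d$. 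The displayed inequality shows this count is $\geq x^{1-B}/(4\log x)$ for each such $d$. Then, exactly as in AGP, I would form the bipartite incidence between these admissible $d$ and the values $(p-1)/d =: k$, each of which satisfies $k \leq x^{1-B}$, $(k,L)=1$, and now $2^{\nu_0}\nmid k$, and apply the pigeonhole principle: summing the counts over $d$ and dividing by the number of possible $k$ values produces a single $k$ hit by at least the claimed proportion $\frac{2^{-D_B-2}}{\log x}\#\{d\divides L : 1\leq d\leq x^B\}$ of the divisors. That $k$ inherits $2^{\nu_0}\nmid k$ automatically, which is the new conclusion.

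The main obstacle, and the only place real care is needed, is the interaction between the new hypothesis and the Brun--Titchmarsh bound on the extra term. I must confirm that $\pi(dx^{1-B};2^{\nu_0}d,1)$ genuinely admits the bound $\frac{8}{2^{\nu_0}(1-B)}\frac{dx^{1-B}}{\phi(d)\log x}$ uniformly over the relevant $d$ --- this requires the modulus $2^{\nu_0}d$ to stay below the range where Brun--Titchmarsh applies (i.e.\ $2^{\nu_0}d \leq dx^{1-B}$, which holds for constant $\nu_0$ and large $x$), and it uses $\phi(2^{\nu_0}d) \geq 2^{\nu_0-1}\phi(d)$ so that the $2^{\nu_0}$ appears in the denominator to make the term small. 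I would also need to check that the extra exclusion does not disturb AGP's handling of the troublesome divisors, but since we are only \emph{removing} primes from an already-lower-bounded set and the removed quantity is controlled by the same inequality family, no new troublesome cases arise.

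Finally, I would remark that the strengthened summation hypothesis $\sum_{q\divides L}1/q \leq (1-B)/32 - 1/2^{\nu_0}$ is harmless: since $\nu_0$ is a fixed constant, the right-hand side is still a positive constant depending only on $B$, so the later construction of $L$ (with its controlled sum of reciprocals of prime divisors) can be adjusted to meet this slightly smaller constant without affecting the final exponent $EB-\epsilon$. This is exactly the point the excerpt defers as ``explained later,'' and it closes the proof of \Cref{prachar}.
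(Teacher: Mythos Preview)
Your proposal is correct and follows essentially the same route as the paper: insert the single extra subtracted term $\pi(dx^{1-B};2^{\nu_0}d,1)$, bound it via Brun--Titchmarsh, absorb the resulting $1/2^{\nu_0}$ into the strengthened hypothesis on $\sum_{q\mid L}1/q$, and then run AGP's pigeonhole over the values $k=(p-1)/d$ unchanged. One small wording point: the three-term expression is a \emph{lower bound} for the desired prime count rather than an exact count (over-subtraction can occur when a prime is $\equiv 1$ modulo both $dq$ and $2^{\nu_0}d$), but since only a lower bound is needed this does not affect your argument.
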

%\end{minipage}

\subsection{The Modified~\protect\cite{agp}}

We follow the~\cite{agp} method using our new result above (\Cref{prachar}) and a new
choice for \(G\), as in~\cite{banks}.
Let \(G = (\Z/2^{\nu_0+z}L\Z)^*\). By the Chinese Remainder
Theorem, if there is a sequence whose product is the identity in
\(G\), then the product is both \(1 \mod{L}\) and \(1\mod{2^{\nu_0+z}}\).  We
use this \(G\) instead of \((\Z/L\Z)^*\). If we denote by \(n(G)\) the largest
sequence of elements of \(G\) which does not have a subsequence that multiplies
to the identity, then this choice of \(G\) does not change the upper bound on
\(n(G)\) given in \cite{agp}'s original argument. \cite{agp} parameterizes the proof of their
main theorem on some \(y\) sufficiently large, and calculates both \(L\) and the outwardly
visible parameter \(x\) based on \(y\). The upper bound on \(n(G)\)
parameterized by \(y\), given originally in equation (4.4) in~\cite{agp},
becomes \[n(G) < 2^{\nu_0+z}\lambda(L)(1+\log{2^{\nu_0+z}L})
  \leq e^{3\theta y},\] with the right hand side not changing. 

The last issue is the one mentioned above, that \(\sum_{\text{prime } q |
    L}\frac{1}{q}\) must be less than \(\frac{1-B}{32} - \frac{1}{2^{\nu_0}}\)
instead of just \(\frac{1-B}{32}\) The reason why this is not a problem is that
AGP shows \(\sum_{\text{prime } q | L}\frac{1}{q} \leq 2\frac{\log\log
    y}{\theta \log y}\), which is actually asymptotically less than any
  constant.

The changes we have made have only affected the minimum choice of \(x\) for which
the proof will work; the other logic of the proof is not affected. So for
large enough \(x\) we get the same fraction of sequences whose products are
\(1\) in \(G\).  Since any product of such a sequence \(\prod(S)\) is
\(1\pmod{L}\) it follows that the product is \(1\pmod{kL}\) and thus a
Carmichael number. Any prime number \(p \in S\) is of the form \(dk+1\) with
\(d\) odd and \(k = 2^\nu l\) and \(2^{\nu+z} \divides \prod(S)-1\), so \(p-1 =
  dk | \frac{\prod(S)-1}{2^z}\).  Hence, we have a proof of \Cref{mainthm}.

\section{Upper Bound and Empirical Results}\label{otherthings}

\begin{table}[t]
  \centering
  \begin{tabular}{r r r r r r r r r}
    \toprule
    \# Prime factors: & 3 & 4 & 5 & 6 & 7 & 8 &  All \\
    \midrule
 \(z=0\) & 1166 & 2390 & 3807 & 2233 & 388 & 16 & 10000 \\
    1    & 498  & 1244 & 1834 & 1090 & 204 & 8  & 4878  \\
    2    & 239  & 586  & 916  & 553  & 99  & 6  & 2399  \\
    3    & 110  & 297  & 462  & 298  & 48  & 3  & 1218  \\
    4    & 52   & 139  & 232  & 142  & 23  & 1  & 589   \\
    5    & 26   & 76   & 108  & 75   & 13  & 1  & 299   \\
    6    & 12   & 39   & 49   & 40   & 6   & 0  & 146   \\
    7    & 10   & 20   & 21   & 21   & 0   &    & 72    \\
    8    & 2    & 12   & 10   & 11   &     &    & 35    \\
    9    & 0    & 8    & 2    & 5    &     &    & 15    \\
    10   &      & 4    & 1    & 3    &     &    & 8     \\
    11   &      & 3    & 1    & 2    &     &    & 6     \\
    12   &      & 2    & 0    & 2    &     &    & 4     \\
    13   &      & 2    &      & 1    &     &    & 3     \\
    14   &      & 0    &      & 1    &     &    & 1     \\
    \bottomrule
  \end{tabular}
  \caption{The number of depth-\(z\) Carmichael numbers up to 1713045574801 (the
    10000th Carmichael number), filtered by number of prime factors.}
  \label{depth_table}
\end{table}

From the OEIS' list of the first \(10,000\) Carmichael numbers~\cite{oeis_carmichael},
we tallied the numbers which are \(z\)-deep Carmichaels for \(z=1\) to
\(14\), the maximum depth observed. We also separated the counts by the number
of prime factors up to \(8\), the maximum number observed. The
results are in \Cref{depth_table}.

Observe that \(C_{z}(x)\)
is about \(1/2^z\) of \(C(x)\). It would be interesting to prove this
rigorously. We now discuss two points which make progress
in this direction. First is an observation about the proof of the
latest upper bound for \(C(x)\), given in~\cite{psw80} and improved
in~\cite{pomerance81}. We observe that the dominant term in the proof of the
upper bound follows the pattern in the table. Second is a heuristic
idea to support the pattern of halving the number of Carmichaels with each
increase in depth. Although they are far from rigorous, they do allow for some
qualitative predictions.

\subsection{The Dominant Term in the Carmichaels Upper Bound}

\newcommand{\csum}{\sum_{x^{1-2\delta} < k \leq x^{1-\delta}} x/kf(k)}

Let \(\ln_k x\) denote the \(k\)-fold iteration of \(\ln\). In
1980~\cite{psw80} proved the following:

\begin{thm}[6 in~\protect\cite{psw80}]
  For each \(\epsilon > 0\), there is an \(x_0(\epsilon)\) such that for all
  \(x \geq x_0(\epsilon)\), we have \(C(x) \leq x \exp{-(1-\epsilon)\ln x \cdot
      \ln_3 x / \ln_2 x}\)
\end{thm}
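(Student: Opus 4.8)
\emph{Proof proposal.} The plan is to run the classical Erdős counting argument (which is what~\cite{psw80} makes quantitative), organizing Carmichael numbers by the value of their universal exponent. First I would invoke Korselt's criterion to record that every Carmichael number \(n \leq x\) is odd, squarefree, satisfies \((p-1)\divides (n-1)\) for all \(p\divides n\), and hence obeys \(\lambda(n)\divides (n-1)\) where \(\lambda(n)=\operatorname{lcm}_{p\divides n}(p-1)\); in particular \(n\equiv 1\pmod{\lambda(n)}\). I would then group the Carmichael numbers by the value \(L=\lambda(n)\) and write \(C(x)\leq \sum_{L}N(x,L)\), where \(N(x,L)\) counts the Carmichael numbers \(n\leq x\) with \(\lambda(n)=L\). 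Every such \(n\) is a squarefree product of primes drawn from the set \(\mathcal{P}(L)=\{p \text{ prime}: (p-1)\divides L\}\), which has size at most \(\tau(L)\) (one prime per divisor of \(L\)), and additionally satisfies \(n\equiv 1\pmod{L}\).

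The engine is a pair of competing bounds on \(N(x,L)\). The first is the \emph{residue bound} \(N(x,L)\leq x/L + 1\), coming from \(n\equiv 1\pmod{L}\); this is strong when \(L\) is large. The second is a \emph{combinatorial bound}: since \(n\) is a squarefree product of at most \(\Omega(n)\leq \ln x/\ln 2\) primes chosen from \(\mathcal{P}(L)\), we have \(N(x,L)\leq \sum_{k\leq \ln x/\ln 2}\binom{\tau(L)}{k}\), which is strong when \(\tau(L)\) is small. Next I would fix a threshold \(T\), apply the residue bound for \(L\geq T\) and the combinatorial bound for \(L<T\), and invoke Wigert's maximal order of the divisor function, \(\tau(L)\leq \exp\!\big((\ln 2 + o(1))\,\ln L/\ln_2 L\big)\), to control the small-\(L\) range. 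Because every relevant \(L\) satisfies \(L<x\), one has \(\ln_2 L \sim \ln_2 x\) throughout the dominant range, so the exponent in the combinatorial estimate simplifies. Balancing the two contributions and solving for the optimal \(T\) is exactly where the quantity \(\ln x\cdot \ln_3 x/\ln_2 x\) is forced to appear, and this is the ``dominant term'' the paper refers to; the slack \((1-\epsilon)\) absorbs the \(o(1)\) in Wigert's bound together with the fact that \(T\) is chosen near-optimally rather than exactly.

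The hard part will be twofold. The first difficulty is that a naive residue bound summed over all \(L\) gives \(\sum_{L\leq x} x/L \sim x\ln x \gg x\), which is useless; to avoid this I would not range \(L\) over all integers but only over genuine values \(L=\lambda(n)\), exploiting that such values are sparse (equivalently, grouping by the largest prime factor of \(n\) so that the residue class modulo \(L\) is coupled to a size restriction). The second and more delicate difficulty is extracting the precise constant \(1-\epsilon\): this requires that the refined combinatorial count \(\sum_{k}\binom{\tau(L)}{k}\), rather than the crude \(2^{\tau(L)}\), be carried through the optimization, since only the sharper count yields the exponent \((1-\epsilon)\,\ln x\,\ln_3 x/\ln_2 x\) instead of a strictly weaker rate. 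Once the threshold optimization is executed with these two refinements in place, collecting the contributions from \(L<T\) and \(L\geq T\) and absorbing lower-order factors into \(\epsilon\) yields the stated bound for all \(x\geq x_0(\epsilon)\).
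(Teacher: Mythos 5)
Your proposal correctly isolates the two arithmetic inputs behind the theorem --- the residue-class information coming from $p-1\mid n-1$ via the Chinese Remainder Theorem, and the bound $\#\{p:\ p-1\mid L\}\le\tau(L)$ combined with Wigert's maximal order of $\tau$ --- but the architecture you assemble from them is not the paper's and, more importantly, does not work. The paper follows \cite{psw80}: split the Carmichael numbers into $N_1$ (those $\le x^{1-\delta}$), $N_2$ (those with a prime factor $\ge x^\delta$), and $N_3$ (the rest); for $N_3$, every $n$ has a divisor $k$ with $x^{1-2\delta}<k\le x^{1-\delta}$, and since $n\equiv 0\pmod{k}$ and $n\equiv 1\pmod{f(k)}$ with $(k,f(k))=1$, each $k$ accounts for at most $1+x/(kf(k))$ Carmichael numbers, reducing the theorem to an estimate of $\sum_{x^{1-2\delta}<k\le x^{1-\delta}}x/(kf(k))$. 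The crucial feature is that the size information (the modulus contains a factor $k>x^{1-2\delta}$) and the combinatorial information (a large $k$ built only from primes $p$ with $p-1\mid m$ for a small $m$ must have many prime factors drawn from a set of size $\le\tau(m)$, hence is rare) act on the \emph{same} count simultaneously.

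Your split by $L=\lambda(n)$ with a threshold $T$ decouples these two tools, and in neither range is one tool alone sufficient. In the large-$L$ range you must control $\sum_{L\ge T}(x/L+1)$ over attained values of $\lambda$; the $+1$ terms alone sum to the number of distinct $\lambda$-values of Carmichael numbers up to $x$, which you can only bound by $C(x)$ itself (circular), and the sparsity you would need for the $x/L$ terms --- the value set of $\lambda$ thinner than $x\exp(-\ln x\ln_3x/\ln_2x)$, which is smaller than $x/(\log x)^A$ for every fixed $A$ --- is essentially equivalent in strength to the theorem being proved and is not supplied by ``grouping by the largest prime factor.'' In the small-$L$ range the count $\sum_{j\le\ln x/\ln 2}\binom{\tau(L)}{j}$ uses neither $n\le x$ in any multiplicative sense nor $n\equiv 1\pmod{L}$, and it is hopeless: when $\tau(L)\le\ln x/\ln 2$ it equals $2^{\tau(L)}$, which cannot be forced below $x\exp(-(1-\epsilon)\ln x\ln_3x/\ln_2x)$ uniformly, and for highly composite $L$ near the forced threshold $T\ge\exp((1-\epsilon)\ln x\ln_3x/\ln_2x)$ Wigert gives $\tau(L)$ as large as $\exp\bigl((\ln 2+o(1))\ln x\ln_3x/(\ln_2x)^2\bigr)$, so the binomial sum is roughly $\tau(L)^{\ln x/\ln 2}=\exp\bigl((\ln x)^2\ln_3x/(\ln_2x)^2\bigr)$, vastly exceeding $x$. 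The missing idea is the divisor-localization step: trap a divisor $k$ of $n$ in $(x^{1-2\delta},x^{1-\delta}]$ (possible in the dominant class because all prime factors of $n$ are below $x^\delta$), so that the modulus $kf(k)$ is always at least $x^{1-2\delta}$, and reserve the $\tau$/Wigert count for bounding how many such $k$ can have $f(k)$ small.
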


See~\cite{psw80}, p. 1014. We outline their proof here. Let
\(\delta > 0\). Divide the Carmichael numbers \(n \leq x\) into three classes:

\begin{align*}
  N_1 &= \mbox{\# Carmichaels } n \leq x^{1-\delta} \\
  N_2 &= \mbox{\# Carmichaels } x^{1-\delta} < n \leq x \mbox{ where } n \mbox{
    has a prime factor } p \geq x^\delta \\
  N_3 &= \mbox{\# Carmichaels } x^{1-\delta} < n \leq x \mbox{ where all prime
    factors of } n \mbox{ are below } x^\delta
\end{align*}

We get that \(N_1 \leq x^{1-\delta}\) trivially, and \(N_2 <
  2x^{1-\delta}\) (see~\cite{psw80} for details).

\cite{psw80} show 
\begin{align}
N_3 \leq x^{1-\delta} + \csum, \label{n3bound}
\end{align}
where \(f(k)\) is the
least common multiple of \(p-1\) for all \(p \divides k\). The sum in
\Cref{n3bound} is the
dominating term in the sum \(N_1 + N_2 + N_3 = C(x)\). We show how to
strengthen this term for \(C_z(x)\).

\begin{prop}\label{upperbound}
  The number of \(z\)-deep Carmichael numbers \(n \leq x\) divisible by some
  integer \(k\) is at most \(1 + x/2^zkf(k)\).
\end{prop}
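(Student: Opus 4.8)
The goal is to count $z$-deep Carmichael numbers $n \leq x$ divisible by a fixed integer $k$, and show there are at most $1 + x/2^z k f(k)$ of them. First I would recall the characterization from \Cref{zkorselt}: such an $n$ is squarefree and satisfies $(p-1) \mid \frac{n-1}{2^z}$ for every prime $p \mid n$. The key consequence is that for every prime $p \mid k$, since $p \mid n$, we have $(p-1) \mid \frac{n-1}{2^z}$. Because $f(k) = \operatorname{lcm}_{p \mid k}(p-1)$, taking the least common multiple over all $p \mid k$ yields $f(k) \mid \frac{n-1}{2^z}$. In other words, $n \equiv 1 \pmod{2^z f(k)}$, i.e.\ $n$ lies in a fixed residue class modulo $2^z f(k)$.

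**The counting step.**
The whole proposition then reduces to counting integers $n \leq x$ that simultaneously are divisible by $k$ and lie in the residue class $1 \pmod{2^z f(k)}$. The condition $k \mid n$ means $n$ lies in the class $0 \pmod k$, so by the Chinese Remainder Theorem (noting that the congruence $n \equiv 0 \pmod k$ and $n \equiv 1 \pmod{2^z f(k)}$ are compatible because they are consistent on any shared factor — indeed $\gcd(k, 2^z f(k))$ divides both $0$ and $1$ only when it is $1$, which holds since $f(k)$ is built from the $p-1$) the admissible $n$ form a single arithmetic progression with common difference $\operatorname{lcm}(k, 2^z f(k))$. The number of terms of such a progression in $[1,x]$ is at most $1 + x/\operatorname{lcm}(k, 2^z f(k))$.

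**Finishing.**
It remains to bound $\operatorname{lcm}(k, 2^z f(k))$ from below by $2^z k f(k)$, which would give the claimed estimate $1 + x/2^z k f(k)$. This is where I would be careful: in general $\operatorname{lcm}(k, 2^z f(k)) \leq k \cdot 2^z f(k)$, so the naive bound goes the \emph{wrong} way, and one cannot simply assert $\operatorname{lcm}(k, 2^z f(k)) = 2^z k f(k)$. The cleanest route is to avoid combining the two moduli into an lcm at all: since $n \equiv 0 \pmod k$ forces $k \mid n$, write $n = km$, and then the condition $f(k) \mid \frac{n-1}{2^z}$, i.e.\ $2^z f(k) \mid n - 1 = km - 1$, constrains $m$ to a residue class modulo $2^z f(k)$ (this class exists precisely when $\gcd(k, 2^z f(k)) = 1$). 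The number of valid $m \leq x/k$ is then at most $1 + (x/k)/(2^z f(k)) = 1 + x/2^z k f(k)$, as desired.

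**The main obstacle.**
The subtle point — and the step I expect to require the most care — is verifying the coprimality $\gcd(k, 2^z f(k)) = 1$ that makes the residue class for $m$ well-defined and the count come out to a single progression rather than being empty or requiring a correction. Since $f(k)$ is the lcm of the $p-1$ over primes $p \mid k$, every prime dividing $k$ is odd-or-even and could in principle divide some $p' - 1$ for another prime $p' \mid k$; so $\gcd(k, f(k))$ need not be $1$ in general. The resolution is that we do not actually need full coprimality: the congruence $km \equiv 1 \pmod{2^z f(k)}$ is solvable for $m$ precisely when $\gcd(k, 2^z f(k)) \mid 1$, and if it is \emph{not} solvable then there are \emph{no} valid $n$ and the bound $1 + x/2^z k f(k)$ holds vacuously. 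Thus in either case the count of valid $m$ in an interval of length $x/k$ is at most $1 + x/2^z k f(k)$, completing the proof.
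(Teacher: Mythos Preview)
Your proof is correct and follows essentially the same approach as the paper: establish that any such $n$ satisfies $n \equiv 0 \pmod{k}$ and $n \equiv 1 \pmod{2^z f(k)}$, then count via the Chinese Remainder Theorem. Your derivation of the congruence $2^z f(k) \mid n-1$ (directly from $f(k) \mid (n-1)/2^z$) is in fact a bit cleaner than the paper's detour through $2^{z+y}$ with $y = \max_{p\mid n}\nu_2(p-1)$, and your explicit handling of the coprimality issue---observing that if $\gcd(k,2^z f(k))>1$ then no such $n$ exists and the bound holds vacuously---spells out what the paper compresses into the single clause ``So $2^z f(k)$ and $k$ are coprime.''
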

\begin{proof}
  Any such \(n\) is \(0 \pmod{k}\) and \(1
    \pmod{2^zf(k)}\). The latter congruence is because \(n \equiv 1
    \pmod{f(k)}\) and \(n \equiv 1 \pmod{2^{z+y}}\), where y is the largest
  number such that \(2^y \divides p-1\) for some \(p \divides n\) prime. So
  \(2^zf(k)\) and \(k\) are coprime, and the result follows by the Chinese
  Remainder Theorem.
\end{proof}

With this lemma, and the observation that any \(n\) in the third class has a
\(k \divides n\) where \(x^{1-2\delta} < k \leq x^{1-\delta}\), we have that
\[N_3 \leq x^{1-\delta} + \frac{1}{2^z}\csum.\]

It is possible to also derive similar bounds for \(N_1\) and \(N_2\), in order
to show that \[C_z(x) < \frac{1}{2^z}x\exp{(-(1-\epsilon)\ln x
    \cdot \ln_3 x / \ln_2 x)}.\] This does not improve the bound asymptotically,
though, since if \(\epsilon_1 < \epsilon_2\) then \[x\exp{(-(1-\epsilon_1)\ln x
    \cdot \ln_3 x / \ln_2 x)}  < \frac{1}{2^z} x\exp{(-(1-\epsilon_2)\ln x \cdot
    \ln_3 x / \ln_2 x)}\] asymptotically for any \(z\). Nevertheless, we still
find this interesting. Pomerance~\cite{pomerance81}
sharpens the estimate for the sum in \Cref{n3bound} to get a slightly better upper bound for
\(C(x)\), and conjectures that this upper bound is tight. Assuming this is the
case, the sum in \Cref{n3bound} is the most important term in determining the growth of
\(C(x)\). Additionally, \Cref{upperbound} fits almost perfectly with the data in
\Cref{depth_table}.

\subsection{The Local Korselt Criterion}

\def \Znstar {{\Z}/(n)^*}
\def \Ztwo {{\Z}_2}

Let $n$ be a composite number, and recall $\lambda(n)$ is
the maximum order of any element of $\Znstar$.  If $p$ is
prime, we say that $n$ is {\sl $p$-Korselt} if 
$\nu_p(\lambda(n)) \le \nu_p(n-1)$.  For example, 33 is 2-Korselt 
but 15 is not.  This is a local version of the Korselt criterion.
Indeed, $n$ is a Carmichael number iff it is $p$-Korselt for every $p$,
and satisfies a global property (squarefree with at
least 3 prime factors).

Let $n$ be a Carmichael number, say $n = p_1 p_2 ... p_r$.  Then
$$
   \nu_2 (n-1) - \max_i \left\{ \nu_2 (p_i - 1) \right\} \ge 0.
$$
We say $n$ has exact depth $z$ if this difference is $z$.  
By \Cref{zkorselt}, then, ``depth $z$'' is the same as 
``exact depth $\ge z$.''

To study this situation, we shall model
$p_1,p_2,\ldots,p_r$ by i.i.d. random elements of
$\Ztwo^*$ (invertible 2-adic integers).  In binary notation, such
a number is written
$$
   \cdots b_4 b_3 b_2 b_1 1 .
$$
Here $b_i \in \{0,1\}$ for $i \ge 1$.  Our model amounts to
imagining that these bits are chosen by independent flips of 
a fair coin.

Let $\nu_i = \nu_2(p_i-1)$. (We are abusing notation here.)
Note first that if all $\nu_i$ are equal, then $p_1 p_2 \cdots p_r$
is 2-Korselt.  We now distinguish three cases.

First, let $r$ be odd, with the exponents $\nu_i$ equal.  
Then we have
\begin{align*}
           p_1 & = 1 + u_1 2^{\nu} \\
           p_2 & = 1 + u_2 2^{\nu} \\
               & \vdots            \\
           p_r & = 1 + u_r 2^{\nu} 
\end{align*}
with each $u_i$ odd.  Their product is
$$
1 + \left( \sum_{i=1}^r u_i \right) 2^\nu + 
    \hbox{ [ terms divisible by $2^{\nu+1}$ ] }
\equiv 1 + 2^\nu \pmod {2^{\nu+1}},
$$
so the exact depth is 0.

Second, let $r$ be even, with the exponents $\nu_i$ equal.  Then,
\begin{align*}
           p_1 & = 1 + 2^\nu + u_1 2^{\nu+1} \\
           p_2 & = 1 + 2^\nu + u_2 2^{\nu+1} \\
               & \vdots            \cr
           p_r & = 1 + 2^\nu + u_r 2^{\nu+1} \\
\end{align*}
with $u_i$ arbitrary.  Since $r$ is even and $\nu \ge 1$,
$$
p_1 \cdots p_r = 
1 + \left( r/2 + \sum_{i=1}^r u_i \right) 2^{\nu + 1} + 
    \hbox{ [ terms divisible by $2^{\nu+2}$ ] }
\equiv 1 \pmod {2^{\nu+1}},
$$
so the depth is at least 1.  As a consequence of this equation,
$$
p_1 \ldots p_r - 1 =
p_1 \ldots p_{r-1}(1 + 2^\nu) - 1 + p_1 \ldots p_{r-1} u_r 2^{\nu+1} \equiv 0 \pmod {2^{\nu+z}}
$$
iff
$$
\frac{p_1 \ldots p_{r-1}(1 + 2^\nu) - 1} {2^{\nu+1}} + p_1 \ldots p_{r-1} u_r 
      \equiv 0 \pmod {2^{z-1}}.
$$
This determines $u_r$ mod $2^{z-1}$, making the probability 
of depth $\ge z$ equal to $1/2^{z-1}$, for $z \ge 1$.

Finally, let the exponents $\nu_i$ be unequal.  Let
$\nu := \max_i \{ \nu_2 (p_i-1) \} = \nu_2(p_1-1)$.  
Then we may write
\begin{align*}
           p_1 & = 1 + \qquad \qquad u_1 2^\nu \\
           p_2 & = 1 + 2x_2 + u_2 2^\nu \\
               & \vdots            \\
           p_r & = 1 + 2x_r + u_r 2^\nu \\
\end{align*}
with $0 \le x_2,\ldots,x_r < 2^{\nu-1}$, and (without loss of
generality) $x_r \ne 0$.  Whether 2-Korselt holds depends
entirely on the $x_i$'s.  If it does, we have
$$
p_1 \cdots p_r - 1 =
p_1 \cdots p_{r-1}(1 + 2x_r) - 1 + p_1 \cdots p_{r-1} u_r 2^{\nu}
\equiv 0 \pmod {2^{\nu+z}}
$$
iff
$$
\frac{ p_1 \cdots p_{r-1}(1 + 2x_r) - 1 } {2^\nu} + p_1 \cdots p_{r-1} u_r 
\equiv 0 \pmod {2^z}
$$
Since the coefficient of $u_r$ is odd, this congruence has
one solution.  Therefore, for unequal exponents,
$$
\Pr[ \hbox{ depth $\ge z$ } | \hbox{ 2-Korselt } ] = 1/2^z.
$$
% Note: Maple rand() in 0..10^12-1

To summarize, we have the following result.

\begin{thm}
  Let \(p_1,\dots,p_r\) be randomly chosen odd \(2\)-adic integers,
  with \(r \geq 3\).  Let \(z \geq 1\).  Under the condition that
  \(p_1, \ldots, p_r\) is \(2\)-Korselt,

  $$
  \Pr[ \hbox{ depth $\ge z$ } ] = \begin{cases}
    1 & \hbox{if $r$ is odd and all $\nu_2(p_i - 1)$ are equal} \\
    1/2^{z-1} & \hbox{if $r$ is even and all $\nu_2(p_i - 1)$ are equal} \\
      1/2^z & \hbox{otherwise} \\
  \end{cases}
  $$
\end{thm}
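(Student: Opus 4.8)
The plan is to assemble the theorem from the three configuration-by-configuration computations already carried out above, after first setting up the underlying probability space cleanly. I model $p_1,\dots,p_r$ as i.i.d.\ elements of $\Ztwo^*$, each written $\cdots b_3 b_2 b_1 1$ with the bits $b_i$ independent fair coins, and I write $\nu_i = \nu_2(p_i-1)$ and $\nu = \max_i \nu_i$. Since $r$ is fixed its parity is not random; the only random feature used to split into cases is whether all the $\nu_i$ coincide. First I would record that the events ``all $\nu_i$ equal'' and ``not all $\nu_i$ equal'' partition the sample space, and that each of the three branches of the claimed formula corresponds to one of these events, refined by the (fixed) parity of $r$ in the all-equal case. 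Thus the theorem reduces to computing, in each configuration, the conditional probability of exact depth $\ge z$ given that $p_1\cdots p_r$ is $2$-Korselt, and then invoking \Cref{zkorselt} to read off the depth statement from the divisibility of $n-1$ by the appropriate power of $2$.

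Next I would carry out the valuation bookkeeping in each configuration, which is the substance of the argument. When all $\nu_i$ equal $\nu$, expanding $\prod_i(1+u_i 2^\nu)$ with each $u_i$ odd gives $n-1 \equiv \big(\sum_i u_i\big)2^\nu \pmod{2^{\nu+1}}$; for $r$ odd the sum of $r$ odd units is odd, pinning $\nu_2(n-1)=\nu$ and hence fixing the depth for that branch outright, while for $r$ even the leading term is divisible by $2^{\nu+1}$ and the analysis must be pushed one bit further. The crucial mechanism, common to the even-equal and the unequal configurations, is that once all lower-order bits are fixed the requirement $2^{\nu+z}\divides n-1$ becomes a single linear congruence in the top free variable $u_r$ modulo a power of $2$, and the coefficient of $u_r$ in that congruence is a product of the $p_i$, hence odd. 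I would emphasize this unit-coefficient step: it guarantees the congruence has exactly one solution in $u_r$ per residue class, producing the factor $1/2^{z-1}$ in the even-equal branch (where depth is already forced to be $\ge 1$) and $1/2^z$ in the unequal branch.

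The one point requiring care, and the main obstacle, is the interaction between the conditioning on $2$-Korselt and the uniformity of the free bits. In the unequal configuration, whether $n$ is $2$-Korselt is determined entirely by the low-order data $x_2,\dots,x_r$, whereas the depth condition constrains the high-order bits governing $u_r$; I would verify that these two groups of bits are independent under the fair-coin model, so that conditioning on $2$-Korselt leaves $u_r$ uniform modulo $2^z$ and the ``one solution out of $2^z$'' count is legitimate. Once this independence is in place, each branch follows by reading off the corresponding conditional count, and since the three configurations exhaust the sample space the case analysis is complete. I expect no difficulty at the odd primes, since \Cref{zkorselt} reduces everything to the $2$-adic valuation already tracked here.
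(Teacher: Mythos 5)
Your proposal follows essentially the same route as the paper: the same three-way case split on whether the $\nu_2(p_i-1)$ are all equal (refined by the parity of $r$), the same expansion of $\prod_i(1+u_i2^\nu)$ to locate $\nu_2(n-1)$, and the same key step that the condition $2^{\nu+z}\divides n-1$ reduces to a linear congruence in $u_r$ whose coefficient is a product of the $p_i$ and hence odd, giving exactly one solution modulo the relevant power of $2$. Your explicit check that the $2$-Korselt conditioning depends only on the low-order data $x_2,\dots,x_r$ and therefore leaves $u_r$ uniform is a point the paper leaves implicit, but the argument is otherwise the paper's own.
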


\begin{table}[t]
  \centering
\begin{tabular}{r r r r r r r r r}
\toprule
$r$            &3    &4    &5    &6    &7    &8    &9    &10        \\
\midrule
\hbox{count}   &4299 &2600 &2533 &1951 &1830 &1573 &1471 &1314      \\
$N/r$          &3333 &2500 &2000 &1667 &1428 &1250 &1111 &1000      \\
\bottomrule
\end{tabular}
\caption{2-Korselt $r$-tuple counts ($N=10000$ samples).}
\label{local_table}
\end{table}

In our local model, what is the probability that
$p_1 \cdots p_r$ is 2-Korselt?
To study this, we first ran simulations, taking each
$p_i$ to be $1 + 2R_i$, with $R_i$ an 12-digit pseudorandom integer.
The Monte Carlo results, given in Table 2, suggest that
$\Pr[ \hbox{ $p_1 p_2 \cdots p_r$ is 2-Korselt } ] = \Theta(1/r)$.

Further analysis, which we give in the appendix, reveals that
$$\Pr[ \hbox{ $p_1 p_2 \cdots p_r$ is 2-Korselt } ] \in \Q$$
and that this is indeed $\Theta(1/r)$.  Our computations
match the observations. For example the observed fraction
for $r=3$ is close to the exact probability $3/7 = 0.428571...$.

Observe that the fraction of tuples $p_1,...,p_r$ for which
all $\nu_i$ are equal is
$$
   2^{-r} + 2^{-2r} + 2^{-3r} + \cdots = \frac 1 {2^r - 1}.
$$
Since the fraction of 2-Korselt $r$-tuples is $\Theta(1/r)$,
we can draw the following conclusion about the local model:
Ignoring the equal-exponent case, whose frequency diminishes
with increasing $r$, the fraction of 2-Korselt $r$-tuples with
depth $z$ (that is, exact depth $\ge z$)
decreases geometrically, with multiplier 1/2.

We conjecture, therefore, that for every \(z \geq 1\),
$$
\lim_{x \rightarrow \infty} C_z(x) / C(x) = 1/2^z.
$$

Moreover, if $C_z^{(r)}(x)$ and $C^{(r)}(x)$ denote similar counts
for Carmichaels with $r$ prime factors, there is a constant $c_r$
such that
$$
\lim_{x \rightarrow \infty} C^{(r)}_z(x) / C^{(r)}(x) = c^{(z)}_r,
$$
and $c^{(z)}_r \rightarrow 2^{-z}$ as $r$ increases.

Let us look at \Cref{depth_table} in this light.  The prediction seems accurate
for overall counts, but becomes less so when $z$ and $r$
are small.  For example, the local model predicts that 1/3 of the 
2-Korselt numbers for $r=3$ will have depth 1 (this was checked by simulation).
However, the actual fraction in our population of Carmichaels
is $498/1166 = 0.427101...$.

We do not have an explanation for this, but we can point out
two weaknesses in the local model.  First, it ignores
the odd primes.  Second, it assumes that the
$p_i$ are independent, when in fact they interact 
(e.g. $\sum_{i=1}^r \nu_i \le \log_2 n$).

What would it take to make the heuristic argument rigorous?
First, we would need to know that the prime number theorem
for arithmetic progressions still held, when the primes were
restricted to those appearing in Carmichael numbers. Second,
we would need a precise understanding of the deviation from
independence for primes appearing together in a Carmichael
number.  (That there is a deviation is clear, since the number of primes that
are $3 \pmod{4}$ has to be even.)

\bigskip

It is also of interest to consider the prime factor distribution
in \Cref{depth_table}.  By the Erd\H{o}s-Kac theorem, a random number $\le n$
has about $\log\log n  + M$ distinct prime factors, where 
$M \doteq 0.261497$ is the Mertens constant.  For \Cref{depth_table}, we have
$n = 1.71305 \times 10^{12}$, so this mean is $\lambda = 3.59973$.
However, we don't have random numbers, since every Carmichael
has at least 3 prime factors.  The conditional expectation
can be reckoned as follows.  One of the standard
models for the number of prime factors is the Poisson
distribution.  Let $Z \sim P(\lambda)$.  Under this hypothesis,
$$
        E[Z; Z \ge  3] = 3.14719, \qquad
      \Pr[Z \ge 3] = 0.697205.
$$
Dividing the first by the second gives us a prediction of
4.5140.  On the other hand, the actual average (computed from 
the top row of the table) is 4.8335.  The relative
error is about 7\%.

\bigskip

%\begin{itemize}
%  \item Rigorous nontrivial upper bound on \(C_z(n)\)
%  \item Adapt other results about Carmichael numbers. E.g. the result about
%        Carmichael numbers with no small prime factors to \(z\)-deep
%        Carmichaels (see ``On the difficulty of finding reliable witnesses''
%        also by AGP)
%\end{itemize}

%\section*{Other things}

%Things I haven't added yet/future work:
%\begin{itemize}
%  \item Proof/discussion of upper bound on \(C_z(n)\)
%  \item If \(n\) has at least one less than \(z\)-deep witness then is it
%    guaranteed to have more? (Think algebraic argument for Fermat
%    witnesses)
%  \item Try to let \(z\) grow with \(n\)
%  \item Adapt result about Carmichael numbers with no small prime factors to
%    \(z\)-deep Carmichaels (see ``On the difficulty of finding reliable
%    witnesses'' also by AGP)
%\end{itemize}
\FloatBarrier

\bibliographystyle{alpha}
\bibliography{paper}

\appendix

\def \Znstar {{\Z}/(n)^*}
\def \Ztwo {{\Z}_2}
\def \Ztwostar {{\Z}_2^*}

\section{Probability of 2-Korselt}

We now establish the probability of that an \(r\)-tuple is 2-Korselt in our model.

\begin{lemma} Let $X_1,\ldots,X_n$ be i.i.d. random variables having
a geometric distribution with parameter 1/2.  (So $X_i$ is 1 with
probability 1/2, 2 with probability 1/4, and so on.)  Let
$Z = \max_i \{ X_i \}$.  Then
$$
W(n) := \sum_{k \ge 1} \Pr[Z=k] \cdot \frac{1}{2^{k-1}}
= \sum_{j=0}^n (-1)^j {n \choose j} \frac{1}{2^{j+1} - 1}.
$$
\end{lemma}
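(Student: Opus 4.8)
The plan is to compute $W(n)$ directly: find the distribution of $Z=\max_i X_i$, expand the resulting powers by the binomial theorem, interchange the two summations, evaluate the inner geometric series in $k$, and finally simplify the rational coefficient using the vanishing of an alternating binomial sum.

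First I would record the tail and distribution function of each $X_i$. Since $X_i$ is geometric with parameter $1/2$, we have $\Pr[X_i > k] = 2^{-k}$, hence $\Pr[X_i \le k] = 1 - 2^{-k}$ for every $k \ge 0$. By independence, $\Pr[Z \le k] = (1 - 2^{-k})^n$, and differencing gives the point mass $\Pr[Z=k] = (1 - 2^{-k})^n - (1 - 2^{1-k})^n$ for $k \ge 1$ (using $2^{-(k-1)} = 2^{1-k}$). One checks the edge case $\Pr[Z=1] = 2^{-n}$, since $(1-2^0)^n = 0$ for $n \ge 1$.

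Next I would substitute this into the definition of $W(n)$ and expand each power as $(1 - 2^{-k})^n = \sum_{j=0}^n (-1)^j \binom{n}{j} 2^{-jk}$. Collecting the two binomial expansions, the coefficient of $\binom{n}{j}(-1)^j$ inside $\Pr[Z=k]$ is $2^{-jk} - 2^{-j(k-1)} = 2^{-jk}(1 - 2^j)$, so that $W(n) = \sum_{k \ge 1} 2^{1-k} \sum_{j=0}^n (-1)^j \binom{n}{j} 2^{-jk}(1 - 2^j)$. Interchanging the (absolutely convergent) sums over $j$ and $k$, the inner sum over $k$ is the geometric series $\sum_{k \ge 1} 2^{1-k} 2^{-jk} = 2 \sum_{k \ge 1} 2^{-k(j+1)} = 2/(2^{j+1} - 1)$. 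This yields $W(n) = \sum_{j=0}^n (-1)^j \binom{n}{j}\, 2(1 - 2^j)/(2^{j+1} - 1)$.

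The final step, and the only one needing a small trick, is to simplify that coefficient. I would write $2(1 - 2^j)/(2^{j+1} - 1) = (2 - 2^{j+1})/(2^{j+1} - 1) = -1 + 1/(2^{j+1} - 1)$. The constant $-1$ contributes $-\sum_{j=0}^n (-1)^j \binom{n}{j} = -(1-1)^n = 0$ for $n \ge 1$, leaving precisely $\sum_{j=0}^n (-1)^j \binom{n}{j}/(2^{j+1} - 1)$, as claimed. The main obstacle is modest: one must justify the interchange of summation (absolute convergence of the double series suffices) and notice the decomposition that collapses the spurious factor $2(1-2^j)$ into a vanishing alternating binomial sum plus the desired term.
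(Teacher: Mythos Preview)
Your proof is correct. Both you and the paper compute the CDF $\Pr[Z\le k]=(1-2^{-k})^n$, expand by the binomial theorem, interchange summations, and sum the inner geometric series. The one difference is in the order of operations: the paper first applies an Abel (partial) summation to rewrite
\[
W(n)=\sum_{k\ge 1}\frac{1}{2^{k-1}}\bigl(P_n(k)-P_n(k-1)\bigr)
     =\sum_{k\ge 1}\frac{1}{2^{k}}\,(1-2^{-k})^n,
\]
so that only a single power $(1-2^{-k})^n$ needs to be expanded, and the desired formula drops out immediately. You instead expand both terms of $P_n(k)-P_n(k-1)$ and then clean up at the end via the identity $2(1-2^{j})/(2^{j+1}-1)=-1+1/(2^{j+1}-1)$ together with $\sum_j(-1)^j\binom{n}{j}=0$. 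The partial-summation shortcut buys a slightly shorter computation; your route is more direct but requires that final algebraic trick. Either way the argument is complete.
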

\begin{proof} Let $P_n(k) = \Pr[ Z \le k ] = (1 - 2^{-k})^n$.
Applying partial summation,
$$
W(n) = \sum_{k \ge 1} \frac{1}{2^{k-1}}
       \left[ P_n(k) - P_n(k+1) \right]
       = \sum_{k \ge 1} \frac{1}{2^k} (1 - 2^{-k})^n.
$$
To obtain the result, expand the $n$-th powers by the binomial
theorem, interchange the order of summation, and sum the
resulting geometric series.
\end{proof}

% Note: Very similar sum in Knuth v. 3, ex. 5.2.2 on p. 137.
% Probably can analyze in the same way (Mellin transform).

\begin{thm}\label{korseltprob}
Let $p_1,\ldots,p_r$ be random elements of $\Ztwostar$, with $r \ge 3$.
Then $p_1 p_2 \cdots p_r$ is 2-Korselt with probability
$$
\frac{1}{2^r - 1} \left[
         1 + \sum_{2 \le s < r \atop s\ {\rm even}}
         {r \choose s}
         \sum_{j=0}^{r-s} (-1)^j {r-s \choose j} \frac 1 {2^{j+1} - 1}
         \right].
$$
\end{thm}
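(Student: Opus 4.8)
The plan is to exploit the group isomorphism $\Z_2^* \cong \{\pm 1\} \times (1 + 4\Z_2) \cong \Z/2 \times \Z_2$, which linearizes every valuation in sight. Writing each $p_i = (-1)^{e_i} 5^{c_i}$ with $e_i$ a fair bit and $c_i$ Haar-uniform on $\Z_2$ (all independent), one checks that $\nu_2(p_i - 1) = 1$ when $e_i = 1$ and $\nu_2(p_i - 1) = 2 + \nu_2(c_i)$ when $e_i = 0$, while the product satisfies $n = (-1)^E 5^C$ with $E = \sum_i e_i$ and $C = \sum_i c_i$, so that $\nu_2(n - 1) = 1$ if $E$ is odd and $\nu_2(n-1) = 2 + \nu_2(C)$ if $E$ is even. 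First I would record these identities and restate the 2-Korselt condition $\nu_2(n-1) \ge \max_i \nu_2(p_i - 1)$ purely in terms of $E$, $\nu_2(C)$, and the $w_i := \nu_2(c_i)$.

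Next I would organize the computation by the bottom (minimal) level $a = \min_i \nu_2(p_i - 1)$ and its multiplicity $s$. Because the $\nu_2(p_i - 1)$ are i.i.d. geometric, a short computation gives $\Pr[\text{bottom count} = s] = \binom{r}{s}/(2^r - 1)$, which already produces the prefactor and the binomial coefficient. The heart of the argument is then to evaluate the conditional 2-Korselt probability given bottom count $s$. When $s = r$ (all exponents equal) the product is automatically 2-Korselt, contributing the ``$1$'' in the bracket; when $s$ is odd and $s < r$ the $s$ bottom factors force $\nu_2(n-1) = a < \max_i \nu_2(p_i-1)$, so the contribution is $0$, matching the restriction to even $s$ in the statement.

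The key step is the even case $2 \le s < r$. Here the $s$ bottom factors multiply to something congruent to $1$ modulo a higher power of $2$, and I claim that after factoring out the common $2^{a-1}$ this ``merged block'' is a Haar-uniform element of $\Z_2$, independent of the remaining $r - s$ factors. Granting this, the 2-Korselt event becomes $\nu_2(\,\text{merged} + \sum_{\text{upper}} c_i\,) \ge T$, where $T$ is the maximum of the $r - s$ upper (rescaled) exponents; since one summand is uniform and independent of $T$, the conditional probability is exactly $E[2^{-T}]$. By the Lemma this equals $W(r-s)$, because $T+1$ is the maximum of $r - s$ i.i.d. geometric variables, so $E[2^{-T}] = E[2^{-(Z-1)}] = W(r-s)$. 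Summing $\binom{r}{s} W(r-s)/(2^r-1)$ over even $2 \le s < r$ and adding the all-equal term yields the stated formula.

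The main obstacle is the merging claim, and in particular treating the sign level $a = 1$ and the higher levels $a \ge 2$ uniformly. This is precisely where the coordinatization earns its keep: at $a = 1$ the $s$ bottom primes are the ones with $e_i = 1$, and since $s$ is even their signs cancel, leaving $5^{\sum c_i}$ with $\sum c_i$ a sum of (at least one) independent uniform $c_i$, hence uniform; at $a \ge 2$ all factors already lie in $1 + 4\Z_2$ and the same cancellation of an even number of odd unit-parts, followed by dividing out $2^{a-1}$, again exposes a uniform summand. I would therefore prove once and for all that a sum of an even number of independent uniform odd $2$-adic units is $2$ times a uniform element of $\Z_2$, which drives both cases. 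The remaining routine points — that conditioning on ``$\nu_2 \ge a+1$'' rescales an upper $c_i$ to a uniform element, that an odd block pins the valuation to $a$, and the bookkeeping that the merged block is independent of $T$ — are then straightforward.
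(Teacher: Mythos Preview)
Your approach is correct and reaches the same formula, but by a genuinely different route from the paper. The paper never invokes the structure of $\Z_2^*$; it works directly with the binary expansions $p_i = 1 + u_i 2^{\nu_i}$ and uses the principle of deferred decisions: reveal bits of all $p_i$ in parallel until the minimum exponent $\mu$ and its multiplicity $s$ are determined, note that $\Pr[s] = \binom{r}{s}/(2^r-1)$, and then, for even $s<r$, continue revealing bits until $\nu$ and all but two of the $u_i$ are known, at which point the $2$-Korselt congruence pins down $\nu-\mu-1$ bits of the last unrevealed $u_i$, giving $\Pr[\text{2-Korselt}\mid \mu,\nu]=2^{-(\nu-\mu-1)}$; averaging over $\nu-\mu$ produces $W(r-s)$.

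Your argument instead linearizes via $p_i=(-1)^{e_i}5^{c_i}$, turning the multiplicative $2$-Korselt condition into the additive inequality $\nu_2(C)\ge T$ in the exponent group. The two computations are the same in different coordinates (your $T$ is exactly the paper's $\nu-\mu-1$), but each buys something: the paper's bit-level argument is completely elementary and needs no structure theory, while your coordinatization makes the uniformity and independence of the ``merged block'' essentially automatic, since a sum containing one Haar-uniform summand independent of the rest is Haar-uniform. The one extra wrinkle your parametrization creates is the case split between the sign level $a=1$ and the levels $a\ge 2$, which the paper's bitwise view handles uniformly; your claim that an even sum of independent uniform odd $2$-adic units is twice a uniform element of $\Z_2$ is exactly what is needed to close that gap, and it is correct.
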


Before proving this, let us fix notation.
Let $p_i = 1 + u_i 2^{\nu_i}$ with $u_i$ odd, for $i=1,\ldots,r$.
(Almost surely, $p_i \ne 1$, so $\nu_i \ge 1$).
Let's call $(\nu_1,\ldots,\nu_r)$ the {\sl exponent vector}
of $p_1,\ldots,p_r$.  Let $\mu = \min_i \{ \nu_i \}$ and
$\nu = \max_i \{ \nu_i \}$.

It is an interesting fact that the 2-Korselt property constrains
the exponent vector.  In particular, unless the exponents are
equal, the minimum exponent $\mu$ must occur an even number of times.
To prove this, suppose there are $s$ copies of $\mu$, and $s<r$.
If $s$ is odd,
$$
p_1 p_2 \cdots p_r \equiv 1 + \sum_{i=1}^s u_i 2^\mu
                   \equiv 1 + 2^\mu \pmod {2^{\mu+1}} ,
$$
which cannot be 1 mod $2^{\nu}$.  This holds for Carmichael
numbers as well.  We have not seen this observation in the literature,
although it is known for $\mu=1$. (We thank Andrew Shallue for informing us
about this.)

Now to prove \Cref{korseltprob}.  We will exploit the principle of deferred
decisions~\cite{knuth} , which is a ``dynamic'' way of thinking about
conditional probability.

Imagine that we reveal bits of the $p_i$'s in parallel (taking
blocks of $r$ at a time), until the minimal exponent $\mu$ is known.
Then, the $p_i$'s look like this:
$$
\begin{matrix}
                \cdots  * * 1 0 \cdots 0 0 1. \\
                \cdots  * * 1 0 \cdots 0 0 1. \\
                \cdots  * * 1 0 \cdots 0 0 1. \\
                \cdots  * * 0 0 \cdots 0 0 1. \\
                                \vdots        \\
                \cdots  * * 1 0 \cdots 0 0 1. \\
                    \longleftarrow \ \rm time \\
\end{matrix}
$$
In this picture, the *'s stand for bits that are not yet revealed.
Note that the block of bits immediately to their right is the
first one, after the initial block of 1's, that is not zero.
(All $p_i$ are odd, so the first block is forced.) Suppose
there are $s$ 1's and $r-s$ 0's in that block.  The probability
of obtaining such a block is
$$
\frac{{r \choose s}}{2^r - 1}
$$
since there are $r \choose s$ binary tuples with Hamming weight $s$,
and $2^r - 1$ blocks that force a stop.

Given this information, what is the probability that
$p_1 p_2 \ldots p_r$ is 2-Korselt?  It is 1 when $s=r$ (regardless
of parity), and it is 0 when $s$ is odd with $s<r$.

The remaining case ($s$ even, $s<r$) can be analyzed as follows.
We continue the process, revealing only enough bits to determine
$\nu_{s+1},\ldots,\nu_r$.  Whether or not the 2-Korselt property
holds is determined solely by the unseen bits.  Order the $p_i$'s
so that $p_1$ and $p_2$ have the minimum exponent, and
now reveal all of $p_3,\ldots,p_r$.  Then,
$$
p_1 p_2 \ldots p_r \equiv 1 \pmod {2^\nu}
$$
iff
$$
u_1 (1 + 2^\mu u_2) + u_2 \equiv
\frac{(p_3 \cdots p_r)^{-1} - 1} {2^\mu} \pmod {2^{\nu-\mu}} .
$$
The right hand side is integral, and even because the $p_i$ with
exponent $\mu$ come in pairs.  The coefficient of $u_1$ is odd.
Therefore, for each possible $u_2$ (odd), there is exactly one
way to choose $u_1$ (odd) mod $2^{\nu - \mu}$ so as to make the
above congruence true.  Since $\nu-\mu-1$ bits of $u_1$ are
now forced, we have (for these $s$)
$$
\Pr[\hbox{ $p_1 \ldots p_r$ is 2-Korselt } | \mu, \nu ]
= \frac{1}{2^{\nu-\mu-1}}.
$$

To summarize,
$$
\Pr[\hbox{ $p_1 \cdots p_r$ is 2-Korselt } | \nu_1,\ldots,\nu_r ]
= \begin{cases}
  1, &\mbox{ if $s=r$;} \\
  0, & \mbox{if $1 \le s < r$ and $s$ is odd;} \\
  \frac{1}{2^{\nu-\mu-1}}, & \mbox{if $2 \le s < r$ and $s$ is even.} \\
\end{cases}
$$
(Note that $s,\mu,\nu$ are all functions of $\nu_1,\ldots,\nu_r$.)
When $s<r$ is even, the random variable $\nu - \mu$, necessarily
1 or greater, has the same distribution $Z$ in the lemma, but with
$n = r-s$.  Therefore,
$$
\Pr \big[ \hbox{ $p_1 \cdots p_r$ is 2-Korselt } |\ s \big]
= \begin{cases}
  1, & \mbox{if $s=r$;} \\
  0, & \mbox{if $1 \le s < r$ and $s$ is odd;} \\
  W(r-s), & \mbox{if $2 \le s < r$ and $s$ is even.} \\
  \end{cases} 
$$
The theorem now follows from the lemma and the conditional
probability formula $\Pr[K] = E[\Pr[K|s]]$.

Exact values of the probabilities, which are rational, can be
readily computed from the theorem.  Here, we list a few of them,
and their decimal values.

$$
\begin{matrix}
1 &2 &3 &4 &5 &6 &7 &8 &9 &10 \\
\\
1 &\frac {1} {3} &\frac {3} {7} &\frac {9} {35} &\frac {167} {651}
&\frac {43} {217} &\frac {725} {3937} &\frac {95339} {602361}
&\frac {24834279} {171003595} &\frac {49160655} {376207909} \\
\\
1.000 &0.333 &0.429 &0.257 &0.257 &0.198 &0.184 &0.158 &0.145 &0.131 \\
\end{matrix}
$$

We claimed that when $r \ge 3$, a product of $r$ random odd 2-adic
integers is 2-Korselt with probability $\Theta(r^{-1})$.  This
follows from the two theorems below.

\bigskip

\begin{thm}As $r \rightarrow \infty$,
$$
\Pr[\hbox{ $p_1 \ldots p_r$ is 2-Korselt } ] = \Omega(r^{-1}).
$$
\end{thm}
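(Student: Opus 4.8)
The plan is to lower-bound the exact probability of \Cref{korseltprob} by replacing the varying factor $W(r-s)$ with a single uniform estimate and then summing the binomial coefficients in closed form. Writing the probability as $\tfrac{1}{2^r-1}\bigl[\,1+\sum_{2\le s<r,\ s\text{ even}}\binom{r}{s}W(r-s)\,\bigr]$, every summand is nonnegative, so any valid lower bound on each $W(r-s)$ propagates to a lower bound on the whole expression.

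First I would show that $W(n)>\tfrac{1}{4n}$ for all $n\ge1$. Starting from the representation $W(n)=\sum_{k\ge1}2^{-k}(1-2^{-k})^n$ established in the proof of the preceding lemma, I keep only the tail $k\ge k_0:=\lceil\log_2 n\rceil$ (legitimate since all terms are positive). For these $k$ we have $2^{-k}\le 1/n$, hence $(1-2^{-k})^n\ge(1-1/n)^n\ge\tfrac14$ for $n\ge2$ (the case $n=1$ is immediate since $W(1)=2/3$). Summing the geometric tail $\sum_{k\ge k_0}2^{-k}=2^{-k_0+1}$ and using $2^{k_0}<2n$ then gives $W(n)>\tfrac14\cdot\tfrac1n$.

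The key step is the observation that in each surviving term the argument $n=r-s$ satisfies $r-s\le r$, so $W(r-s)>\tfrac{1}{4(r-s)}\ge\tfrac{1}{4r}$ uniformly. This lets me pull the constant out of the sum:
$$
\Pr[\,p_1\cdots p_r\text{ is 2-Korselt}\,]\ \ge\ \frac{1}{2^r-1}\cdot\frac{1}{4r}\sum_{2\le s<r,\ s\text{ even}}\binom{r}{s}.
$$
The remaining sum is elementary: since $\sum_{s\text{ even}}\binom{r}{s}=2^{r-1}$, deleting the extreme terms $s=0$ and (when $r$ is even) $s=r$ leaves a sum of at least $2^{r-1}-2$. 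Substituting, the probability is at least $\tfrac{2^{r-1}-2}{4r(2^r-1)}$, which is $\Omega(r^{-1})$.

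I expect the only genuine work to be the lower bound $W(n)>1/(4n)$; the rest is a single binomial identity. The point that deserves careful statement is the uniform replacement $W(r-s)\ge 1/(4r)$: it exploits $r-s\le r$ to retain a constant fraction, namely $\tfrac12-o(1)$, of the full binomial mass $2^r$. Keeping instead only the central term $s\approx r/2$ would give $\binom{r}{r/2}W(r/2)\approx(2^r/\sqrt r)\cdot(2/r)$ and hence merely $\Omega(r^{-3/2})$; summing over the entire even range is exactly what sharpens the exponent to the claimed $\Omega(r^{-1})$.
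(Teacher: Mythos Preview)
Your argument is correct and follows the same overall strategy as the paper's: establish $W(n)\ge c/n$ and feed this into the formula of \Cref{korseltprob}. The execution, however, is more elementary in both steps. For the lower bound on $W(n)$, the paper bounds $E[Z]\le H_n/\log 2+1$ and then applies Jensen's inequality to get $E[2^{-Z}]\ge 2^{-E[Z]}=\Omega(1/n)$; your tail truncation at $k_0=\lceil\log_2 n\rceil$ obtains the same estimate directly, with no appeal to Jensen or harmonic numbers. For the binomial sum, the paper substitutes $s'=r-s$, reads $\sum_{s'\in A}\binom{r}{s'}2^{-r}/s'$ as $E[(s')^{-1}\mid A]\Pr[A]$ for a $\mathrm{Binomial}(r,1/2)$ variable, and invokes Jensen a second time to bound $E[(s')^{-1}\mid A]\ge 1/E[s'\mid A]$; you instead use the cruder but sufficient replacement $1/(r-s)\ge 1/r$, pull the constant out, and quote $\sum_{s\text{ even}}\binom{r}{s}=2^{r-1}$. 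Both routes lose only absolute constants, so both yield $\Omega(r^{-1})$; yours avoids Jensen entirely and is the shorter proof.
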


\begin{proof}  Let $Z$ be as in the lemma.  It can be shown that
$$
\frac{H_n}{\log 2}
\le E[Z] \le
\frac{H_n}{\log 2} + 1 .
$$
Also, from comparison with the integral,
$$
H_n \le \log n + 1.
$$
Therefore, by Jensen's inequality,
$$
E[2^{-Z}] \ge \frac{1}{4 n} .
$$
Therefore, the probability in question is at least
$$
\frac{1}{2^r}
+
\frac{1}{2^r}
\sum_{2 \le s < r \atop s\ {\rm even}} {r \choose s} \frac{1}{4(r-s)}.
$$
The first term is exponentially small and can be neglected.
Since ${r \choose s} = {r \choose s'}$ when $s' = r-s$, we can
rewrite the second term as
$$
\frac{1}{4}
\sum_{s' \in A} {r \choose s'} \frac{2^{-r}} {s'} ,
$$
where $A = \{s' : 1 \le s' \le r-2 \hbox{ and } s'\equiv r (2)\}$.
The sum above equals $E[(s')^{-1} | A] \Pr[A]$, with
$s' \sim \hbox{binomial}(r,1/2)$.
By Jensen's inequality
$$
E[(s')^{-1} | A]
\ge \frac{1}{E[s'|A]}
\ge \frac{\Pr[A]} {E[s']}
= \frac {2 \Pr[A]} {r}.
$$
The claimed result now follows, since $\Pr[A] = 1/2 + o(1)$.
\end{proof}

\bigskip

\begin{thm}As $r \rightarrow \infty$,
$$
\Pr[\hbox{ $p_1 \ldots p_r$ is 2-Korselt } ] = O(r^{-1}).
$$
\end{thm}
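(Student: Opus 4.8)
The plan is to start from the exact formula in \Cref{korseltprob}, which expresses the probability as
\[
\frac{1}{2^r-1}\left[1 + \sum_{\substack{2\le s<r\\ s\text{ even}}}\binom{r}{s}\,W(r-s)\right],
\]
where $W(n)$ is the quantity from the lemma. Since the leading $1$ contributes only $1/(2^r-1)$, which is exponentially small, everything reduces to bounding the weighted binomial sum. I would first obtain a clean upper bound $W(n)\le C/n$ valid for all $n\ge 1$, and then show that the binomial weights, once multiplied by $1/(r-s)$ and summed, produce only $O(2^r/r)$; dividing by $2^r-1$ then yields the desired $O(1/r)$, matching the $\Omega(1/r)$ bound already proved.

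For the bound on $W(n)$ I would use the integral representation $W(n)=\sum_{k\ge 1}2^{-k}(1-2^{-k})^n$ established in the proof of the lemma. Writing $f(x)=2^{-x}(1-2^{-x})^n$, the substitution $u=2^{-x}$ gives $\int_0^\infty f(x)\,dx=\frac{1}{\ln 2}\int_0^1(1-u)^n\,du=\frac{1}{(n+1)\ln 2}$. As a function of $x\ge 0$ the summand $f$ is unimodal, rising to a single peak near $x=\log_2(n+1)$ and then decaying, so the sum $\sum_{k\ge 1}f(k)$ exceeds the integral by at most $\max_x f(x)$, and $\max_x f=\max_{0<u<1}u(1-u)^n\le \frac{1}{n+1}$. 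Hence $W(n)\le \frac{1}{(n+1)\ln 2}+\frac{1}{n+1}=O(1/n)$, and one checks that the resulting constant $C$ already works down to $n=1,2$, the small values that arise when $s$ is close to $r$.

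To finish, I would substitute $W(r-s)\le C/(r-s)$ and invoke the identity $\frac{1}{r-s+1}\binom{r}{s}=\frac{1}{r+1}\binom{r+1}{s}$ together with $\frac{1}{r-s}\le \frac{2}{r-s+1}$ (valid since $s\le r-1$), giving
\[
\sum_{\substack{2\le s<r\\ s\text{ even}}}\binom{r}{s}\frac{1}{r-s}\le \frac{2}{r+1}\sum_{s}\binom{r+1}{s}\le \frac{2\cdot 2^{r+1}}{r+1}.
\]
Thus the weighted sum is $O(2^r/r)$, and the probability is $O(1/r)$. The step I expect to be the main obstacle is the uniform estimate $W(n)=O(1/n)$: because the summand is not monotone, the sum-versus-integral comparison must account correctly for the single peak and must hold uniformly in $n$ (including the smallest values). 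Everything after that is routine binomial manipulation.
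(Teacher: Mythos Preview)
Your proposal is correct and follows essentially the same route as the paper: bound $W(n)=O(1/n)$ by exploiting the unimodality of $x\mapsto 2^{-x}(1-2^{-x})^n$ to compare the sum to $\int_0^\infty f = \frac{1}{(n+1)\ln 2}$, then convert $\binom{r}{s}/(r-s+1)$ into $\binom{r+1}{s}/(r+1)$ and sum the binomials. The only cosmetic differences are that the paper writes $f(t)=e^{-t}(1-e^{-t})^n$ with $t=(\log 2)k$ and uses the cruder error term $2\max f$ (yielding $W(n)\le 4/(n+1)$), after which the substitution $s'=r-s$ makes the binomial identity appear in the form $\binom{r}{s'}/(s'+1)=\binom{r+1}{s'+1}/(r+1)$, avoiding your extra factor $1/(r-s)\le 2/(r-s+1)$.
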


\begin{proof}
Consider $f(t) = e^{-t} (1 - e^{-t})^n$.
This vanishes at 0 and $+\infty$, and is
nonnegative when $t \ge 0$.  Moreover, since
$$
f'(t) = e^{-t}(1 - e^{-t})^{n-1} \left[ (n+1)e^{-t} - 1 \right],
$$
$f$ is unimodal (increases, then decreases) and is maximized when
$e^t = (n+1)$.  Its maximum value is
$$
\frac{1}{n+1} \left( 1 - \frac{1}{n+1} \right)^n
\le \frac{1}{n+1}.
$$
Let $\alpha = \log 2$.  Then,
$$
W(n) = \sum_{k \ge 1} 2^{-k} (1 - 2^{-k})^n
     = \sum_{k \ge 1} f(\alpha k)
$$
$$
     \le \int_0^\infty f(\alpha t) dt + 2 \max\{ f(\alpha t) \}
     = \frac{1}{\alpha(n+1)} + \frac{2}{n+1}
     \le \frac{4}{n+1}.
$$
% Note: 1/(log 2) + 2 = 3.442695...
Using symmetry as before, and including omitted terms (they
are all positive), we get
$$
\Pr[\hbox{ $p_1 \ldots p_r$ is 2-Korselt } ]
\le
\frac{1}{2^r + 1} +
\frac{4}{2^r + 1} \sum_{s' \ge 1} {r \choose s'} \frac{1}{s'+1}.
$$
Only the second term matters, and it equals
$$
\frac {2^{r+3}} {(2^r + 1)(r+1)}
\sum_{s' \ge 2} {r+1 \choose s'} 2^{-(r+1)}
= O(r^{-1}),
$$
since binomial probabilities sum to 1.
\end{proof}

\end{document}